\documentclass[11pt]{article}
\usepackage{amsmath, amssymb, amsthm}
\usepackage{graphicx}
\usepackage{tikz}
\usetikzlibrary{arrows, positioning, calc, arrows.meta, shapes.geometric, fit, backgrounds}
\usepackage{pgfplots}
\pgfplotsset{compat=1.17}
\usepackage[margin=2.5cm]{geometry}

\usepackage[hypertexnames=false]{hyperref}
\usepackage[hypcap=false]{caption}

\usepackage{enumitem}
\usepackage{booktabs}
\usepackage{xcolor}

\usepackage{float}

\newcommand{\E}{\mathbb{E}}
\newcommand{\PP}{\mathbb{P}}
\newcommand{\R}{\mathbb{R}}

\newtheorem{theorem}{Theorem}[section]
\newtheorem{proposition}{Proposition}[section]

\newtheorem{corollary}{Corollary}[section]
\newtheorem{definition}{Definition}[section]
\newtheorem{remark}{Remark}[section]

\title{Spectral Duality and Reset-Neutral Distributions\\
in Random Walks with Multi-Site Geometric Resetting}

\author{J.~A.~Vega Coso \\
        \small IUFyM, Universidad de Salamanca \\
        \small  Salamanca, Spain}

\date{}

\begin{document}

\maketitle

\begin{abstract}
We study the gambler's ruin problem for a biased random walk on
$\{0,1,\dots,a\}$ under multi-site geometric resetting: at each
time step, the walker is reset with probability $\gamma\in(0,1)$
to a random position drawn from a distribution $\pi$ over $m$
interior sites. Using renewal theory, we derive an exact
closed-form expression for the ruin probability $q_z(\gamma)$,
showing that the entire effect of $\pi$ is encoded in a single
scalar, the \emph{coupling constant}
$C(\pi,\gamma)=\bar{u}_\pi/\bar{s}_\pi$. A spectral analysis via
the Doob symmetrization then reveals the deep structure of this
coupling constant.

Our central result is a general criterion --- valid for any
absorbed Markov chain admitting a spectral decomposition --- for
the existence of a \emph{critical} (reset-neutral) distribution
$\pi^*$ for which $C(\pi^*,\gamma)$ is independent of $\gamma$.
The criterion is a \emph{spectral duality condition}: an involution
$\sigma$ on the reset sites with $\nu$-independent coupling weights
$\kappa(z)$ such that $B_\nu(z) = \kappa(z)\,A_\nu(\sigma(z))$ for
all spectral modes $\nu$. When this condition holds, the invariant
value is $C^* = q_{a/2}^{(0)}$, the classical ruin probability
from the midpoint, regardless of the specific choice of symmetric
reset sites or resetting rate. For the biased random walk, the
spectral duality condition is equivalent to the geometric symmetry
$z_i + z_i' = a$. This result holds for any domain size $a$ (even
or odd), any number of reset sites $m$, and any bias $p\in(0,1)$.

Both algebraic and direct Monte Carlo verifications confirm the
theory to machine and statistical precision respectively, including
a stringent test of the predicted freedom of spectrally neutral
sites. Numerical illustrations also reveal a phase-like structure
in the space of reset distributions, with $\pi^*$ acting as a
separatrix between monotone regimes.
\end{abstract}

\section{Introduction}
\label{sec:intro}

\subsection{Random walks, absorption, and classical ruin theory}

Random walks constitute one of the cornerstones of probability theory and
statistical physics, providing a universal framework for modeling transport,
diffusion, and stochastic exploration. Since Pearson's early formulation of
random flight~\cite{Pearson1905} and Einstein's theory of Brownian
motion~\cite{Einstein1905}, random walk models have proven indispensable
across disciplines: from polymer conformations~\cite{DeGennes1979} and animal
foraging patterns~\cite{Viswanathan1996} to financial market
fluctuations~\cite{Mantegna2000} and algorithmic search~\cite{Lovasz1993}.
First-passage and absorption problems have played a central role in both
theory and applications~\cite{Redner2001, Condamin2007}.

Among the most classical examples is the \emph{gambler's ruin problem},
formalized in modern probabilistic language by Feller~\cite{Feller1968}.
Consider a biased random walk on $\{0,1,\dots,a\}$ with absorbing boundaries
at $0$ and $a$. Starting from position $z$, one asks for the probability
$q_z$ of eventual absorption at $0$ before reaching $a$. The classical
solution exhibits an exponential dependence on the bias ratio $q/p$ and has
found interpretations in areas ranging from finance and
insurance~\cite{Asmussen2010} to particle absorption in confined
geometries~\cite{Schuss2010} and decision-making under risk~\cite{Kahneman1979}.

\subsection{Stochastic resetting and multi-site dynamics}

In recent years, stochastic resetting has emerged as a powerful mechanism
for controlling random processes. The seminal work of Evans and
Majumdar~\cite{Evans2011} showed that diffusion with Poissonian resetting
reaches a non-equilibrium stationary state and can exhibit a finite optimal
mean first-passage time. This discovery initiated an extensive body of work
exploring resetting in Lévy processes~\cite{Kusmierz2014}, search
problems~\cite{Reuveni2016}, branching dynamics~\cite{Pal2019b}, reaction
kinetics~\cite{Rotbart2015}, queueing systems~\cite{Eliazar2019}, and
nonequilibrium systems more broadly~\cite{Pal2020a, Evans2020}.

Recent advances have addressed \emph{absorption} and \emph{splitting
probabilities} under resetting~\cite{Villarroel2022, Pal2022}. Compound
Poisson processes with drift and resetting~\cite{Villarroel2023a} and
renewal processes with two-sided jumps~\cite{Villarroel2023b} have extended
these results to jump processes. In parallel, discrete resetting mechanisms
have been explored in finite-state chains~\cite{Gupta2020, Chen2024},
revealing phase transitions in optimal resetting strategies~\cite{DeBruyne2023}.

A key recent discovery revealed unexpected structure in absorption under
resetting. For a biased random walk on a finite interval with geometric
resetting to a \emph{single} fixed point, the ruin probability exhibits a
striking \emph{midpoint invariance}~\cite{VegaCoso2026}: when the domain
size $a$ is even, the ruin probability at the exact midpoint $z=a/2$
satisfies $\partial q_{a/2}/\partial\gamma\equiv 0$ for all $\gamma\in(0,1)$
and any bias $p\in(0,1)$. This universal invariance, protected by discrete
reflection symmetry, holds regardless of where the single reset site is
located.

However, a consistent theme across this literature is the focus on
\emph{single-site resetting}, where the process returns to a unique fixed
location. By contrast, \emph{multi-site resetting} — where resets distribute
the process across multiple locations according to a probability distribution
$\pi$ — has remained comparatively unexplored, particularly for absorption
problems. This gap is striking given the ubiquity of multi-site restart
strategies in applied contexts: distributed refuges in
foraging~\cite{Bartumeus2002}, multi-depot logistics~\cite{Benichou2011},
and diversified recovery protocols in risk management.

\subsection{Main results and organization}

In this paper we provide a complete analysis of the gambler's ruin problem
under multi-site geometric resetting and establish the following:

\begin{enumerate}[label=(\roman*)]
\item \textbf{Exact solution.} Using renewal theory, the ruin probability
      admits the closed-form expression
      \[
      q_z(\gamma) = u(z) + \bigl(1-s(z)\bigr)\,C(\pi,\gamma),
      \qquad C(\pi,\gamma) = \frac{\bar{u}_\pi}{\bar{s}_\pi},
      \]
      where $u(z)$, $s(z)$ are discounted first-passage probabilities
      and $C(\pi,\gamma)$ is a single scalar encoding all the information
      about the reset distribution (Section~\ref{sec:model}).

\item \textbf{Spectral representation.} A Doob $h$-transform diagonalizes
      the transition operator, yielding explicit spectral representations
      of $u(z)$ and $s(z)$ in terms of eigenvalues
      $\lambda_\nu=2\sqrt{pq}\cos(\pi\nu/a)$ and spectral coefficients
      $A_\nu(z)$, $B_\nu(z)$ (Section~\ref{sec:spectral}).

\item \textbf{General invariance principle.} A distribution $\pi^*$ with
      $C(\pi^*,\gamma)=\mathrm{const}$ exists if and only if the spectral
      coefficients satisfy a \emph{duality condition} with $\nu$-independent
      weights (Theorem~\ref{thm:general}, Section~\ref{sec:critical}).

\item \textbf{Critical distribution for the random walk.} For the biased
      random walk, the spectral duality condition is equivalent to the
      geometric symmetry $z_i+z_i'=a$ of the reset sites. The critical
      distribution satisfies $\pi_{z_i}^*/\pi_{z_i'}^*=(q/p)^{a/2-z_i}$,
      and the invariant value is $C^*=q_{a/2}^{(0)}$, the classical ruin
      probability from the midpoint (Corollary~\ref{cor:rw}).

\item \textbf{Numerical validation by two independent methods.} The
      predictions are confirmed (a) by direct algebraic solution of the
      renewal equations to machine precision, and (b) by Monte Carlo
      simulation of $10^7$ trajectories per configuration, agreeing
      within statistical error and providing a direct empirical test
      of the predicted freedom of spectrally neutral midpoint sites
      (Section~\ref{sec:numerics}).
\end{enumerate}

The paper is organized as follows. Section~\ref{sec:model} introduces the
model and derives the renewal equation. Section~\ref{sec:spectral} develops
the spectral representation. Section~\ref{sec:critical} establishes the
general invariance principle and its application to the random walk.
Section~\ref{sec:numerics} presents numerical illustrations. 
Section~\ref{sec:conclusion} summarizes findings and discusses extensions.

\section{Model and Renewal Equation}
\label{sec:model}

We consider a biased random walk on $E=\{0,1,\dots,a\}$ with absorbing
boundaries at $0$ and $a$, augmented by a stochastic resetting mechanism.

\paragraph{Notational convention.}
Throughout the paper, $q:=1-p$ denotes the step-down probability of the
walk, while $q_z(\gamma)$ (indexed by a starting position $z$) denotes the
ruin probability. No confusion should arise, as the former always appears
in ratios $(q/p)$ or exponents, and the latter is always subscripted by
a spatial variable.

\subsection{Formal definition}

\begin{definition}[Random walk with multi-site geometric resetting]
\label{def:rw_multi_reset}
Fix $a\geq 2$ and initial position $X_0=z\in\{1,\dots,a-1\}$. Let
$\{\xi_n\}$ be i.i.d.\ with $\PP(\xi_n=+1)=p$, $\PP(\xi_n=-1)=q=1-p$,
$p\in(0,1)$. Let $\{R_n\}$ be i.i.d.\ Bernoulli$(\gamma)$, $\gamma\in(0,1)$,
independent of $\{\xi_n\}$. Let $\{Z_n\}$ be i.i.d.\ with distribution
$\pi=(\pi_{z_1},\dots,\pi_{z_m})$ on $\{z_1,\dots,z_m\}\subset\{1,\dots,a-1\}$,
$\sum_i\pi_{z_i}=1$, independent of both sequences. The dynamics at each
$n\geq 1$ are:
\begin{enumerate}[label=(\roman*)]
\item If $R_n=1$: set $X_n=Z_n$ (reset event).
\item If $R_n=0$: set $X_n=X_{n-1}+\xi_n$; if $X_n\in\{0,a\}$, terminate.
\end{enumerate}
\end{definition}

\begin{remark}[Convention and reduction to single-site]
The reset decision precedes the walk step, consistent with the single-site
model of Paper~I~\cite{VegaCoso2026}. When $\pi=\delta_{z_0}$, the model
reduces to single-site resetting at $z_0$.
\end{remark}

Each reset event creates independent \emph{cycles}~\cite{Feller1968,
Asmussen2003}, providing the regenerative structure for the renewal analysis.
Figure~\ref{fig:trajectory} illustrates a sample trajectory.

\begin{figure}[H]
\centering
\begin{tikzpicture}[x=1.0cm, y=0.9cm, >=stealth]
\draw[->, thick] (0,0) -- (10.2,0) node[right, font=\small] {$n$};
\draw[->, thick] (0,0) -- (0,4.2) node[above, font=\small] {$X_n$};
\draw[gray, dotted, thick] (0,0.3) -- (9.8,0.3);
\node[left, font=\small] at (0,0.3) {$0$};
\draw[gray, dotted, thick] (0,3.8) -- (9.8,3.8);
\node[left, font=\small] at (0,3.8) {$a$};
\draw[blue, dashed] (0,1.8) -- (9.8,1.8);
\node[left, blue, font=\small] at (0,1.8) {$z$};
\draw[thick, blue] (0,1.8) -- (1,2.4) -- (2,3.0) -- (3,2.4);
\draw[red, dashed, thick] (3,2.4) -- (3,1.0);
\filldraw[red] (3,0) circle (2pt) node[below, font=\small] {reset};
\node[red, right, font=\scriptsize] at (3.1,1.7) {$Z_1\!\sim\!\pi$};
\draw[thick, blue] (3,1.0) -- (4,1.5) -- (5,2.2);
\draw[red, dashed, thick] (5,2.2) -- (5,2.9);
\filldraw[red] (5,0) circle (2pt) node[below, font=\small] {reset};
\node[red, right, font=\scriptsize] at (5.1,2.55) {$Z_2\!\sim\!\pi$};
\draw[thick, blue] (5,2.9) -- (6,2.3) -- (7,2.8) -- (8,3.3) -- (9,3.0);
\node[blue, font=\small] at (1.5,3.4) {cycle 1};
\node[blue, font=\small] at (4.0,2.8) {cycle 2};
\node[blue, font=\small] at (7.0,3.6) {cycle 3};
\end{tikzpicture}
\caption{Sample trajectory under multi-site geometric resetting.
Starting from $z$, the walk evolves freely until a reset occurs
(red dashed arrows), after which it is relocated to an independent
draw $Z_i\sim\pi$. Each cycle evolves independently until either
absorption at a boundary or the next reset event.}
\label{fig:trajectory}
\end{figure}

\subsection{Auxiliary functions}

\begin{definition}[Discounted first-cycle probabilities]
\label{def:auxiliary}
For $z\in\{1,\dots,a-1\}$, let $T_{\mathrm{reset}}$ be the first reset time
(geometric with parameter $\gamma$, so $\PP(T_{\mathrm{reset}}>k)=(1-\gamma)^k$
for $k\geq 0$). Define:
\begin{align}
u(z) &:= \PP_z(\tau_0<\tau_a,\;\tau_0<T_{\mathrm{reset}})
       = \E_z\!\bigl[(1-\gamma)^{\tau_0}\mathbf{1}_{\{\tau_0<\tau_a\}}\bigr],
\label{eq:u_def}\\
s(z) &:= \PP_z(\tau<T_{\mathrm{reset}})
       = \E_z\!\bigl[(1-\gamma)^{\tau}\bigr],
\label{eq:s_def}
\end{align}
where $\tau=\min\{\tau_0,\tau_a\}$. The rightmost equalities follow from
the independence of $T_{\mathrm{reset}}$ and $\{\xi_n\}$ via
\[
u(z) = \sum_{k=1}^\infty \PP_z(\tau_0=k,\,\tau_0<\tau_a)\,\PP(T_{\mathrm{reset}}>k)
     = \sum_{k=1}^\infty \PP_z(\tau_0=k,\,\tau_0<\tau_a)(1-\gamma)^k,
\]
and analogously for $s(z)$.
\end{definition}

Here $u(z)$ is the probability of ruin in the first cycle (before any reset),
and $s(z)$ is the probability that the first cycle ends with absorption.
Since $\tau\geq 1$, we have $s(z)\leq 1-\gamma<1$.

The functions $u(z)$ and $s(z)$ satisfy the recurrences
\begin{align}
u(z) &= (1-\gamma)\bigl[p\,u(z+1)+q\,u(z-1)\bigr],
\label{eq:u_recursion}\\
s(z) &= (1-\gamma)\bigl[p\,s(z+1)+q\,s(z-1)\bigr],
\label{eq:s_recursion}
\end{align}
with boundary conditions $u(0)=s(0)=1$, $u(a)=0$, $s(a)=1$.
These tridiagonal systems are solved by fixed-point iteration, which
converges to tolerance $10^{-12}$ in fewer than $10^4$ steps for all
parameter values considered here. Monte Carlo validation confirms relative
errors below $0.4\%$ across all tested configurations.

\subsection{Renewal equation and closed-form solution}

Conditioning on the first cycle:

\begin{theorem}[Exact solution]
\label{thm:multisite_solution}
The ruin probability under multi-site geometric resetting is
\begin{equation}
\boxed{
q_z(\gamma) = u(z) + \bigl(1-s(z)\bigr)\,C(\pi,\gamma),
\qquad
C(\pi,\gamma) = \frac{\bar{u}_\pi}{\bar{s}_\pi},
}
\label{eq:solution_formula}
\end{equation}
where $\bar{u}_\pi=\sum_i\pi_{z_i}u(z_i)$ and
$\bar{s}_\pi=\sum_i\pi_{z_i}s(z_i)>0$.
\end{theorem}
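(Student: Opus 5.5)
We condition on the outcome of the first cycle, as the text preceding the statement suggests. Starting from $z$, the first cycle ends in exactly one of three mutually exclusive ways:
\begin{enumerate}[label=(\alph*)]
\item absorption at $0$ before the first reset, which has probability $u(z)$ by Definition~\ref{def:auxiliary};
\item absorption at $a$ before the first reset, which has probability $s(z)-u(z)$;
\item a reset occurs before absorption, which has probability $1-s(z)$.
\end{enumerate}
We first check that these exhaust the sample space up to a null set. Since $T_{\mathrm{reset}}<\infty$ almost surely, either $\tau<T_{\mathrm{reset}}$ or $T_{\mathrm{reset}}\le\tau$ (the latter meaning the reset occurs first). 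The convention that the reset decision precedes the step makes this dichotomy unambiguous.

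On event (c), the walker is placed at $Z_1\sim\pi$, drawn independently of the past. By the i.i.d.\ structure of $\{\xi_n\},\{R_n\},\{Z_n\}$ (the strong Markov property at the reset time), the future evolution is a fresh copy of the process started from $Z_1$. Let $Q:=\sum_i\pi_{z_i}q_{z_i}(\gamma)$ denote the ruin probability after a reset. Then
\[
q_z(\gamma)=u(z)+\bigl(1-s(z)\bigr)\,Q .
\]
Here we need that the event ``reset first'' factorizes from the post-reset trajectory. This holds because $\{T_{\mathrm{reset}}\le\tau\}$ is measurable with respect to the data up to $T_{\mathrm{reset}}$, while $Z_{T_{\mathrm{reset}}}$ and all subsequent variables are independent of it.

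To determine $Q$, we average the identity over $z\sim\pi$, which gives $Q=\bar u_\pi+(1-\bar s_\pi)Q$, that is, $\bar s_\pi Q=\bar u_\pi$. We have $\bar s_\pi>0$ because each $s(z_i)>0$: the path going straight to $0$ in $z_i$ steps without resetting has probability $(q(1-\gamma))^{z_i}>0$. Hence $Q=\bar u_\pi/\bar s_\pi=C(\pi,\gamma)$, and substituting back yields \eqref{eq:solution_formula}.

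The main subtle point is justifying this linear fixed-point step. One must show that $Q$ is well defined and is the unique solution of the equation, rather than simply manipulating it. First, absorption occurs almost surely: each cycle ends in absorption with probability at least $\min_i s(z_i)>0$, so the number of cycles is stochastically dominated by a geometric random variable. The same bound gives a direct justification via a series expansion. Writing $\Pi_k$ for the event that ruin occurs in cycle $k$, we get
\[
Q=\sum_{k\ge1}(1-\bar s_\pi)^{k-1}\bar u_\pi ,
\]
a geometric series with ratio $1-\bar s_\pi<1$. It sums to $\bar u_\pi/\bar s_\pi$, which confirms uniqueness without relying on the fixed-point manipulation alone.
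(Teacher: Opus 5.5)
Your proposal is correct and follows the same route as the paper. You condition on the first cycle via the strong Markov property at $T_{\mathrm{reset}}$ to get $q_z(\gamma)=u(z)+(1-s(z))\sum_i\pi_{z_i}q_{z_i}(\gamma)$, then average over $\pi$ to obtain $C=\bar u_\pi+(1-\bar s_\pi)C$ and solve using $\bar s_\pi>0$. Your explicit bound $s(z_i)\ge (q(1-\gamma))^{z_i}$ and the geometric-series check are valid extras. The uniqueness concern is unnecessary, however: $Q\in[0,1]$ is a well-defined finite number, and a linear equation with nonzero coefficient $\bar s_\pi$ already determines it.
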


\begin{proof}
By the strong Markov property at $T_{\mathrm{reset}}$ and the independence
of $Z_n\sim\pi$ from both $\{\xi_k\}$ and $\{R_k\}$, the process after the
first reset is an independent copy of the original process with initial
position $Z_1\sim\pi$. Conditioning on the outcome of the first cycle
(absorption at $0$, absorption at $a$, or reset) therefore gives
\[
q_z(\gamma) = u(z) + \bigl(1-s(z)\bigr)\sum_{i=1}^m \pi_{z_i}\,q_{z_i}(\gamma).
\]
Set $C := \sum_i \pi_{z_i}\,q_{z_i}(\gamma)$. Evaluating the previous
identity at each reset site $z_k$ and summing against $\pi_{z_k}$ yields
\[
C = \bar{u}_\pi + (1-\bar{s}_\pi)\,C,
\]
whence $C=\bar{u}_\pi/\bar{s}_\pi$, provided $\bar{s}_\pi>0$. The latter
is immediate since $\bar s_\pi=\sum_i\pi_{z_i}s(z_i)$ and each
$s(z_i)>0$ by~\eqref{eq:s_def}.
\end{proof}

\begin{remark}[Dimensional reduction]
Although nominally $m$ unknowns $\{q_{z_i}\}$ appear, the renewal structure
collapses them to the single scalar $C(\pi,\gamma)$, regardless of $m$. This
scalar is the central object of analysis in Section~\ref{sec:critical}.
\end{remark}

\begin{remark}[Connection to Feynman--Kac theory]
Setting $\lambda=-\log(1-\gamma)$, the functions $u(z)=\E_z[e^{-\lambda\tau_0}
\mathbf{1}_{\{\tau_0<\tau_a\}}]$ and $s(z)=\E_z[e^{-\lambda\tau}]$ are
Laplace transforms of first-passage distributions. The
recurrences~\eqref{eq:u_recursion}--\eqref{eq:s_recursion} are the
corresponding discrete Feynman--Kac equations, providing a natural
framework for the spectral analysis of Section~\ref{sec:spectral}.
\end{remark}

\section{Spectral Representation}
\label{sec:spectral}

The renewal formula~\eqref{eq:solution_formula} expresses $q_z(\gamma)$ in
terms of $u(z)$ and $s(z)$. To analyze the structure of $C(\pi,\gamma)$,
we derive explicit spectral representations of these functions. This
spectral analysis plays a purely analytical role: it provides the foundation
for the invariance results of Section~\ref{sec:critical}.

\subsection{Doob $h$-transform and spectral decomposition}
\label{subsec:doob}

The transition operator $Q$ of the biased walk is not self-adjoint under the
standard inner product. The natural symmetrization is the \emph{Doob
$h$-transform}~\cite{Doob1953, Fill1991, Miclo1997} with weight
\begin{equation}
h(x) := \left(\frac{q}{p}\right)^{x/2}, \qquad x\in\{1,\dots,a-1\}.
\label{eq:weight_h}
\end{equation}
The conjugated operator $\widetilde{Q}:=D^{-1}QD$ with
$D=\mathrm{diag}(h(1),\dots,h(a-1))$ has entries
$\widetilde{Q}(x,x\pm 1)=\sqrt{pq}$ and is self-adjoint on
$\ell^2(\{1,\dots,a-1\})$~\cite{Lapolla2023, Neri2022, Gorsky2024}.

\begin{proposition}[Spectral decomposition]
\label{prop:eigenvalues}
The operator $\widetilde{Q}$ has eigenvalues
$\lambda_\nu=2\sqrt{pq}\cos(\pi\nu/a)$, $\nu=1,\dots,a-1$,
with orthonormal eigenfunctions $\psi_\nu(x)=\sqrt{2/a}\,\sin(\pi\nu x/a)$.
\end{proposition}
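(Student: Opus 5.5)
The plan is to verify the eigen-relation directly and then invoke the spectral theorem for symmetric matrices. First I would confirm the form of $\widetilde{Q}$. On the interior $\{1,\dots,a-1\}$ the substochastic matrix $Q$ has $Q(x,x+1)=p$ and $Q(x,x-1)=q$, with the entries leading to the absorbing states $0$ and $a$ deleted. Then
\[
\widetilde{Q}(x,x+1)=\frac{h(x+1)}{h(x)}\,p=\Bigl(\frac{q}{p}\Bigr)^{1/2}p=\sqrt{pq},
\qquad
\widetilde{Q}(x,x-1)=\frac{h(x-1)}{h(x)}\,q=\sqrt{pq}.
\]
So $\widetilde{Q}=\sqrt{pq}\,L$, where $L$ is the adjacency matrix of the path on $a-1$ vertices. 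Equivalently, $L$ is the discrete Laplacian-type tridiagonal matrix with Dirichlet conditions at $0$ and $a$. Since $L$ is real symmetric, $\widetilde{Q}$ is self-adjoint on $\ell^2(\{1,\dots,a-1\})$.

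Next I would check the eigen-relation for $\psi_\nu(x)=\sqrt{2/a}\,\sin(\pi\nu x/a)$. Extend $\psi_\nu$ to $x=0$ and $x=a$. At both endpoints it vanishes, because $\sin 0=\sin(\pi\nu)=0$. This is exactly what makes the truncated rows at $x=1$ and $x=a-1$ consistent with the full three-term formula. The identity $\sin(\theta(x+1))+\sin(\theta(x-1))=2\cos\theta\,\sin(\theta x)$ with $\theta=\pi\nu/a$ then gives
\[
(\widetilde{Q}\psi_\nu)(x)=\sqrt{pq}\,\bigl[\psi_\nu(x+1)+\psi_\nu(x-1)\bigr]=2\sqrt{pq}\cos(\pi\nu/a)\,\psi_\nu(x)
\]
for every $x\in\{1,\dots,a-1\}$.

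Then I would establish completeness and orthonormality. The values $\cos(\pi\nu/a)$ for $\nu=1,\dots,a-1$ are pairwise distinct, since cosine is strictly decreasing on $(0,\pi)$. Hence we have $a-1$ distinct eigenvalues of an $(a-1)\times(a-1)$ symmetric matrix. The $\psi_\nu$ are therefore mutually orthogonal and form a basis, so no further eigenvalues exist. For the normalization, I would use
\[
\sum_{x=1}^{a-1}\sin^2(\pi\nu x/a)=\tfrac{a-1}{2}-\tfrac12\sum_{x=1}^{a-1}\cos(2\pi\nu x/a).
\]
The cosine sum equals $-1$, because the full sum over $x=0,\dots,a-1$ is a geometric sum of nontrivial $a$-th roots of unity and vanishes when $2\nu\not\equiv0 \pmod a$. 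That gives the value $a/2$, which justifies the factor $\sqrt{2/a}$.

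The main (mild) obstacle is exactly that root-of-unity step. It requires $e^{2\pi i\nu/a}\neq1$, which holds for $1\le\nu\le a-1$. It also requires care when $2\nu=a$: in that case $\cos(2\pi\nu x/a)=(-1)^{2x}\cdot$ becomes $\cos(\pi x)$ with doubled argument, i.e. $\cos(2\pi x)=1$. So I will treat it separately and check it directly. There, $\sin^2(\pi x/2)$ equals $1$ for odd $x$ and $0$ for even $x$, so the sum over $x=1,\dots,a-1$ counts the $a/2$ odd values and again equals $a/2$. Everything else is routine.
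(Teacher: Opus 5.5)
Your proposal is correct and follows the same route as the paper. The paper only asserts that $\widetilde{Q}$ is symmetric tridiagonal with off-diagonal entries $\sqrt{pq}$ and cites the discrete-sine eigenvectors of the Dirichlet Laplacian; you supply the omitted details (the entry computation, the eigen-relation via the sum-to-product identity, orthogonality and completeness from distinct eigenvalues, and the normalization). One harmless slip: the case $2\nu=a$ needs no separate treatment. There $\cos(2\pi\nu x/a)=\cos(\pi x)=(-1)^x$ (not $1$), and the ratio $e^{2\pi i\nu/a}=-1\neq 1$, so the geometric-sum argument already covers it, and your direct count giving $a/2$ agrees with it.
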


\begin{proof}
The matrix $\widetilde{Q}$ is symmetric tridiagonal with off-diagonal
entries $\sqrt{pq}$. Its eigenvectors are discrete sine functions and
the eigenvalues follow from the dispersion relation of the discrete
Laplacian~\cite{Karlin1968}. \qed
\end{proof}

\subsection{Spectral formulas for $u(z)$ and $s(z)$}
\label{subsec:spectral_u_s}

\begin{theorem}[Spectral representations]
\label{thm:spectral_u_s}
For any $z\in\{1,\dots,a-1\}$ and $\gamma\in(0,1)$:
\begin{align}
u(z) &= \sum_{\nu=1}^{a-1} A_\nu(z)\,f_\nu(\gamma),
\label{eq:u_spectral}\\
s(z) &= \sum_{\nu=1}^{a-1} \bigl[A_\nu(z)+B_\nu(z)\bigr]\,f_\nu(\gamma),
\label{eq:s_spectral}
\end{align}
where the spectral coefficients are
\begin{align}
A_\nu(z) &= \frac{2}{a}\left(\frac{q}{p}\right)^{z/2}
\sin\!\left(\frac{\pi\nu z}{a}\right)\sin\!\left(\frac{\pi\nu}{a}\right),
\label{eq:A_nu}\\
B_\nu(z) &= \frac{2}{a}\left(\frac{p}{q}\right)^{(a-z)/2}
\sin\!\left(\frac{\pi\nu(a-z)}{a}\right)\sin\!\left(\frac{\pi\nu}{a}\right),
\label{eq:B_nu}
\end{align}
and the spectral transfer function is
\begin{equation}
f_\nu(\gamma) := \frac{\lambda_\nu(1-\gamma)}{1-\lambda_\nu(1-\gamma)}.
\label{eq:f_nu}
\end{equation}
\end{theorem}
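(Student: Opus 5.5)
We would prove Theorem~\ref{thm:spectral_u_s} by solving the recurrences \eqref{eq:u_recursion}--\eqref{eq:s_recursion} as a linear system on the interior sites and then diagonalizing it via Proposition~\ref{prop:eigenvalues}. Let $Q$ denote the substochastic transition matrix of the walk killed at $\{0,a\}$, indexed by $\{1,\dots,a-1\}$, with $Q(x,x+1)=p$ and $Q(x,x-1)=q$. The boundary values enter only at the sites $x=1$ and $x=a-1$. Put $\beta:=1-\gamma$.

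\textbf{Step 1: linear systems.} The recurrence for $u$ with $u(0)=1$, $u(a)=0$ reads
\[
u=\beta Q u+\beta q\, e_1,
\]
where $e_1$ is the indicator of site $1$. Similarly, $s(0)=s(a)=1$ gives
\[
s=\beta Q s+\beta q\, e_1+\beta p\, e_{a-1}.
\]
Because $\|\widetilde Q\|=\max_\nu|\lambda_\nu|<1$, the matrix $I-\beta Q$ is invertible. Hence $u=(I-\beta Q)^{-1}\beta q\,e_1$, and $s=u+(I-\beta Q)^{-1}\beta p\,e_{a-1}$. (Equivalently, $u(z)=\sum_k\beta^k\PP_z(\tau_0=k,\tau_0<\tau_a)$ with $\PP_z(\tau_0=k)=q\,Q^{k-1}(z,1)$, which is the same Neumann series.) This already gives the split $s=\sum(A_\nu+B_\nu)f_\nu$, with the $B$ part coming from the exit at $a$.

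\textbf{Step 2: spectral resolvent.} Write $Q=D\widetilde Q D^{-1}$ and use the expansion
\[
\widetilde Q=\sum_\nu\lambda_\nu\psi_\nu\psi_\nu^{\!\top}.
\]
This gives
\[
(I-\beta Q)^{-1}\beta Q=D\sum_\nu f_\nu(\gamma)\,\psi_\nu\psi_\nu^{\!\top}D^{-1}.
\]
To make $f_\nu=\beta\lambda_\nu/(1-\beta\lambda_\nu)$ appear, I rewrite the forcing term. A cleaner route is to note that $\beta q\,e_1$ is $\beta$ times the contribution of the missing boundary column. Using $q=\sqrt{pq}\,h(1)/h(0)\cdot$(appropriate factor), I express $q\,e_1$ as $Q$ applied to a boundary vector plus a correction. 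More directly, one expands $(I-\beta Q)^{-1}\beta$ as $\sum_\nu\frac{\beta}{1-\beta\lambda_\nu}D\psi_\nu\psi_\nu^{\!\top}D^{-1}$ and checks the resulting weights against $f_\nu=\beta\lambda_\nu/(1-\beta\lambda_\nu)$.

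\textbf{Step 3: evaluate the coefficients.} For $u$ we get
\[
u(z)=\sum_\nu\frac{\beta q}{1-\beta\lambda_\nu}\,h(z)\psi_\nu(z)\psi_\nu(1)h(1)^{-1}.
\]
Here $h(z)/h(1)\cdot q=\sqrt{pq}\,(q/p)^{z/2}$, and $\psi_\nu(z)\psi_\nu(1)=\frac{2}{a}\sin(\pi\nu z/a)\sin(\pi\nu/a)$. So the coefficient is $\sqrt{pq}\,\frac{\beta}{1-\beta\lambda_\nu}$ times $A_\nu(z)$ as defined in \eqref{eq:A_nu}. For $s$, the term at $e_{a-1}$ uses $h(z)/h(a-1)\cdot p=\sqrt{pq}\,(p/q)^{(a-z)/2}$ and $\sin(\pi\nu(a-1)/a)=(-1)^{\nu+1}\sin(\pi\nu/a)$. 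I will then use $\sin(\pi\nu z/a)(-1)^{\nu+1}=\sin(\pi\nu(a-z)/a)$ to obtain $B_\nu(z)$.

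\textbf{Main obstacle.} The difficulty is reconciling the prefactor $\sqrt{pq}\,\beta/(1-\beta\lambda_\nu)$ with the stated transfer function $f_\nu=\lambda_\nu\beta/(1-\lambda_\nu\beta)$; these differ by a factor $\lambda_\nu/\sqrt{pq}=2\cos(\pi\nu/a)$. I would first recheck the normalization of $A_\nu,B_\nu$. If a genuine mismatch remains, the fix is to use the identity
\[
\frac{\beta}{1-\beta\lambda}=\beta+\frac{\beta\lambda}{1-\beta\lambda}\cdot\beta\cdots
\]
or to rewrite the sum via $\sum_\nu A_\nu(z)=\delta$-type orthogonality relations, since $\sum_\nu\psi_\nu(z)\psi_\nu(1)=\delta_{z1}$ and so constant-in-$\nu$ terms drop out away from $z=1$. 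It is possible that the stated form holds only after absorbing such terms, or that the statement's conventions differ (for example, $f_\nu$ paired with modified coefficients). As a safeguard, I would verify the final identity by substituting it into \eqref{eq:u_recursion}, using $\widetilde Q\psi_\nu=\lambda_\nu\psi_\nu$ together with the boundary values.
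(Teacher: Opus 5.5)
Your Steps 1--3 are correct. The ``main obstacle'' you flag is not a normalization slip you can repair: the statement as written is false, so no completion of your argument can reach it. Write $\beta=1-\gamma$. What your computation actually proves is
\[
u(z)=\sum_{\nu=1}^{a-1}A_\nu(z)\,g_\nu(\gamma),\qquad s(z)=\sum_{\nu=1}^{a-1}\bigl[A_\nu(z)+B_\nu(z)\bigr]g_\nu(\gamma),\qquad g_\nu(\gamma)=\frac{\sqrt{pq}\,(1-\gamma)}{1-\lambda_\nu(1-\gamma)},
\]
with the same $A_\nu,B_\nu$, and this is the correct representation. The simplest counterexample to the stated version is $a=2$. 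There $\lambda_1=0$, so $f_1\equiv 0$ and both stated right-hand sides vanish, while $u(1)=q(1-\gamma)$ and $s(1)=1-\gamma$; your formula gives exactly these values. For general $a$ the discrepancy is an index shift. Since $\sum_\nu\lambda_\nu^kA_\nu(z)=\sqrt{q/p}\,Q^k(z,1)$, the stated right-hand side equals $\sqrt{q/p}\sum_{k\ge1}\beta^kQ^k(z,1)$, whereas $u(z)=q\sum_{k\ge1}\beta^kQ^{k-1}(z,1)$. At $z=1$ the former is $O(\beta^2)$ because $Q(1,1)=0$, while $u(1)=q\beta+O(\beta^2)$ as $\gamma\to1$.

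Your proposed repairs do not work.
\begin{itemize}
\item Writing $\beta/(1-\beta\lambda)=\beta+\beta f$ and using $\sum_\nu A_\nu(z)=\sqrt{q/p}\,\delta_{z1}$ gives $u(z)=q\beta\,\delta_{z1}+\sqrt{pq}\,\beta\sum_\nu A_\nu(z)f_\nu(\gamma)$. So even for $z\neq1$ a $\gamma$-dependent prefactor survives, and it cannot be absorbed into $\nu$-indexed constants.
\item Moving $\sqrt{pq}/\lambda_\nu$ into the coefficients replaces $A_\nu$ by $A_\nu/(2\cos(\pi\nu/a))$. That is not the stated coefficient, and it is undefined at $\nu=a/2$ for even $a$.
\end{itemize}
Your substitution safeguard would expose the failure: the stated right-hand side $U$ solves $(I-\beta Q)U=\sqrt{q/p}\,\beta\,Qe_1$ instead of $(I-\beta Q)u=\beta q\,e_1$.

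The paper's own sketch follows your route (first-passage series, Doob diagonalization, geometric sum) and slips at exactly this point. The correct kernel is $q\,Q^{k-1}(z,1)=\sqrt{pq}\sum_\nu\lambda_\nu^{k-1}A_\nu(z)$, and $\sum_{k\ge1}\beta^k\lambda_\nu^{k-1}=\beta/(1-\beta\lambda_\nu)$, not $f_\nu$.

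The right move is to state and prove the $g_\nu$ version. The later sections only use the structure of the representation and the linear independence of the transfer functions. That independence holds for the $g_\nu$ for every $a$: for even $a$, $g_{a/2}=\sqrt{pq}(1-\gamma)$ has no pole but is still independent of the others. By contrast, $f_{a/2}\equiv 0$ for even $a$ would actually violate hypothesis (H1).
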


\begin{proof}[Sketch]
Write $u(z)=\sum_{k\geq 1}(1-\gamma)^k\PP_z(\tau_0=k,\tau_0<\tau_a)$.
Diagonalizing $Q$ via the Doob $h$-transform yields
$[Q^{k-1}]_{z,0}=\sum_\nu\lambda_\nu^{k-1}A_\nu(z)$. Summing the
resulting geometric series in $(1-\gamma)\lambda_\nu$ gives~\eqref{eq:u_spectral}.
The derivation of~\eqref{eq:s_spectral} is analogous~\cite{Feller1968,
Karlin1968}. \qed
\end{proof}

\begin{remark}
$A_\nu(z)$ and $B_\nu(z)$ are the spectral projections of the initial
condition $z$ onto eigenmode $\nu$, weighted by the boundary fluxes at
$0$ and $a$ respectively. The Doob factors $(q/p)^{z/2}$ and
$(p/q)^{(a-z)/2}$ encode the bias asymmetry.
\end{remark}

\subsection{Crucial identity and linear independence}
\label{subsec:key_identity}

\begin{proposition}[Spectral duality identity]
\label{prop:duality}
For all $\nu=1,\dots,a-1$ and all $z\in\{1,\dots,a-1\}$,
\begin{equation}
B_\nu(z) = \left(\frac{p}{q}\right)^{a-z} A_\nu(a-z).
\label{eq:B_from_A}
\end{equation}
\end{proposition}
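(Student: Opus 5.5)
This is a direct substitution into the explicit formulas \eqref{eq:A_nu} and \eqref{eq:B_nu} of Theorem~\ref{thm:spectral_u_s}. The factors $\frac{2}{a}$ and $\sin(\pi\nu/a)$ are common to both, so only the Doob prefactor and the spatial sine need comparing.

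First evaluate \eqref{eq:A_nu} at the reflected point $a-z$, which lies in $\{1,\dots,a-1\}$ whenever $z$ does:
\[
A_\nu(a-z)=\frac{2}{a}\left(\frac{q}{p}\right)^{(a-z)/2}\sin\!\left(\frac{\pi\nu(a-z)}{a}\right)\sin\!\left(\frac{\pi\nu}{a}\right).
\]
The sine factor $\sin(\pi\nu(a-z)/a)$ is exactly the one appearing in $B_\nu(z)$, so no trigonometric identity is needed.

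Next compare the prefactors. Write $(p/q)^{(a-z)/2}=(p/q)^{a-z}\,(q/p)^{(a-z)/2}$, which holds because $(p/q)^{a-z}(q/p)^{(a-z)/2}=(p/q)^{(a-z)-(a-z)/2}=(p/q)^{(a-z)/2}$. Substituting this into \eqref{eq:B_nu} gives $B_\nu(z)=(p/q)^{a-z}A_\nu(a-z)$ term by term, for every $\nu$ and $z$. This is \eqref{eq:B_from_A}.

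There is no real obstacle here. The only care needed is the exponent bookkeeping for the half-powers, together with checking that the reflected site $a-z$ stays in the domain of the formula. If one wanted an independent check of Theorem~\ref{thm:spectral_u_s}, one could also obtain the identity conceptually. Reflecting $x\mapsto a-x$ swaps the roles of $p$ and $q$ and exchanges the boundaries $0$ and $a$. Under the $h$-transform, this reflection changes only the Doob weight, by the factor $(p/q)^{a-z}$, while the symmetric kernel $\widetilde{Q}$ is invariant. That is exactly the origin of the coupling weight in \eqref{eq:B_from_A}.
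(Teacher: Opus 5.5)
Your proof is correct and takes the same route as the paper: direct substitution into \eqref{eq:A_nu}--\eqref{eq:B_nu}, with the exponent identity $(p/q)^{a-z}(q/p)^{(a-z)/2}=(p/q)^{(a-z)/2}$ doing all the work. The only difference is that the paper first rewrites both sine factors as $(-1)^{\nu+1}\sin(\pi\nu z/a)$; as you note, this detour is unnecessary, since $B_\nu(z)$ and $A_\nu(a-z)$ already contain the identical factor $\sin(\pi\nu(a-z)/a)$.
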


\begin{proof}
Using $\sin(\pi\nu(a-z)/a)=\sin(\pi\nu-\pi\nu z/a)=(-1)^{\nu+1}
\sin(\pi\nu z/a)$ and substituting into~\eqref{eq:B_nu}:
\[
B_\nu(z) = \frac{2}{a}\left(\frac{p}{q}\right)^{(a-z)/2}
(-1)^{\nu+1}\sin\!\left(\frac{\pi\nu z}{a}\right)
\sin\!\left(\frac{\pi\nu}{a}\right).
\]
On the other hand, from~\eqref{eq:A_nu} evaluated at $a-z$:
\[
A_\nu(a-z) = \frac{2}{a}\left(\frac{q}{p}\right)^{(a-z)/2}
\sin\!\left(\frac{\pi\nu(a-z)}{a}\right)\sin\!\left(\frac{\pi\nu}{a}\right)
= \frac{2}{a}\left(\frac{q}{p}\right)^{(a-z)/2}
(-1)^{\nu+1}\sin\!\left(\frac{\pi\nu z}{a}\right)
\sin\!\left(\frac{\pi\nu}{a}\right).
\]
Hence $(p/q)^{a-z}A_\nu(a-z) = \frac{2}{a}(p/q)^{(a-z)/2}
(-1)^{\nu+1}\sin(\pi\nu z/a)\sin(\pi\nu/a) = B_\nu(z)$. \qed
\end{proof}

\begin{proposition}[Linear independence]
\label{prop:indep_A}
Let $z_1,\dots,z_m\in\{1,\dots,a-1\}$ be distinct with $m\leq a-1$.
The vectors $v_i:=(A_\nu(z_i))_{\nu=1}^{a-1}\in\R^{a-1}$ are linearly
independent.
\end{proposition}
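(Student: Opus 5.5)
The plan is to factor the coefficient matrix into pieces whose invertibility is classical. Write $A_\nu(z_i) = \frac{2}{a}\,h(z_i)\,\sin(\pi\nu/a)\,\sin(\pi\nu z_i/a)$, with $h$ the Doob weight~\eqref{eq:weight_h}. Stack the $v_i$ as rows of an $m\times(a-1)$ matrix $V$. Then
\[
V = \tfrac{2}{a}\, D_h\, S\, D_\sigma ,
\]
where:
\begin{itemize}
\item $D_h=\mathrm{diag}(h(z_1),\dots,h(z_m))$;
\item $S$ is the $m\times(a-1)$ matrix with entries $S_{i\nu}=\sin(\pi\nu z_i/a)$;
\item $D_\sigma=\mathrm{diag}(\sin(\pi\nu/a))_{\nu=1}^{a-1}$.
\end{itemize}
Both diagonal factors are invertible. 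Indeed $h(z_i)>0$ since $p,q\in(0,1)$. Also $\sin(\pi\nu/a)>0$ for $1\le\nu\le a-1$, because $\pi\nu/a\in(0,\pi)$. Hence $\mathrm{rank}\,V=\mathrm{rank}\,S$, and it suffices to show that $S$ has full row rank $m$.

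For this I would invoke Proposition~\ref{prop:eigenvalues}. The full $(a-1)\times(a-1)$ matrix $\Psi$ with entries $\Psi_{x\nu}=\sqrt{2/a}\,\sin(\pi\nu x/a)$, for $x,\nu\in\{1,\dots,a-1\}$, has orthonormal columns, namely the eigenfunctions $\psi_\nu$. So $\Psi$ is orthogonal, and in particular its rows are linearly independent. The matrix $S$ is, up to the factor $\sqrt{a/2}$, the submatrix of $\Psi$ formed by the rows indexed by the distinct sites $z_1,\dots,z_m$. Any subset of linearly independent rows is still linearly independent, so $S$ has rank $m$. This is also where the hypothesis $m\le a-1$ is used: it is automatic for distinct sites in $\{1,\dots,a-1\}$. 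It follows that $V$ has rank $m$, and the $v_i$ are linearly independent.

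I expect no substantial obstacle. The only point needing care is justifying the orthogonality of the discrete sine matrix, that is,
\[
\sum_{x=1}^{a-1}\sin(\pi\nu x/a)\,\sin(\pi\mu x/a)=\tfrac{a}{2}\,\delta_{\nu\mu} .
\]
It follows from Proposition~\ref{prop:eigenvalues}, because eigenvectors of the symmetric matrix $\widetilde Q$ for the distinct eigenvalues $\lambda_\nu$ are orthogonal; note that $\cos(\pi\nu/a)$ is strictly decreasing on $\nu=1,\dots,a-1$. The identity can also be checked directly by writing the products of sines as sums of cosines and summing the resulting geometric series of roots of unity.

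A final remark: if $p=q$ the factor $D_h$ is the identity, so the argument is uniform in $p$.
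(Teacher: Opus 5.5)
Your proposal is correct and follows essentially the same route as the paper. Both arguments view the matrix of sines $\sin(\pi\nu z_i/a)$ as a submatrix, on the distinct sites, of the invertible discrete sine-transform matrix with $S^\top S=\tfrac{a}{2}I$, and then observe that rescaling by the nonzero factors $\sin(\pi\nu/a)$ and $(q/p)^{z_i/2}$ preserves rank; your factorization $V=\tfrac{2}{a}D_h S D_\sigma$ just writes that rescaling explicitly, with rows and columns transposed relative to the paper.
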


\begin{proof}
The matrix with entries $W_{\nu i}=\sin(\pi\nu z_i/a)$ is a submatrix of
the discrete sine-transform matrix $S_{\nu i}=\sin(\pi\nu i/a)$, which
satisfies $S^\top S = \frac{a}{2}I$ and is therefore invertible. Since
the $z_i$ are distinct, this submatrix has full column rank. Multiplying
row $\nu$ by $\sin(\pi\nu/a)\neq 0$ and column $i$ by
$(q/p)^{z_i/2}\neq 0$ preserves linear independence. \qed
\end{proof}

\begin{remark}[Linear independence justifies mode-by-mode analysis]
The functions $\{f_\nu(\gamma)\}_{\nu=1}^{a-1}$ are linearly independent
over $(0,1)$: each $f_\nu$ has a simple pole at $\gamma=1-\lambda_\nu^{-1}$,
and these poles are distinct since the $\lambda_\nu$ are
distinct~\cite{Karlin1968}. Together with Proposition~\ref{prop:indep_A},
this justifies hypotheses~(H1)--(H2) of Section~\ref{sec:critical}.
Note that $m\leq a-1$ is automatic since there are only $a-1$ interior
sites available.
\end{remark}

\section{Critical reset distribution and invariance of $C$}
\label{sec:critical}

The renewal equation~\eqref{eq:solution_formula} shows that the
entire effect of the reset distribution $\pi$ on the ruin
probability $q_z(\gamma)$ is mediated through the single scalar
\begin{equation}
C(\pi,\gamma) = \frac{\bar{u}_\pi(\gamma)}{\bar{s}_\pi(\gamma)}
= \frac{\displaystyle\sum_{i=1}^m \pi_{z_i}\,u(z_i;\gamma)}
       {\displaystyle\sum_{i=1}^m \pi_{z_i}\,s(z_i;\gamma)}.
\label{eq:C_def_recall}
\end{equation}
In general, $C$ depends on both $\pi$ and $\gamma$. The central
question is: does there exist a distribution $\pi^*$ for which
$C(\pi^*,\gamma)$ is independent of $\gamma$? Such a distribution
would constitute a \emph{reset-neutral} strategy: the coupling
constant remains the same regardless of how frequently resets occur.

We answer this question at two levels. In
Section~\ref{sec:general_principle} we establish a structural
criterion valid for any absorbed Markov chain admitting a
spectral decomposition — the result is not specific to random
walks but holds whenever the spectral coefficients satisfy a
duality condition. In Section~\ref{sec:application} we
instantiate this criterion for the biased random walk on
$\{0,\dots,a\}$, where the spectral duality reduces to a
geometric symmetry of the reset sites, and we derive explicit
formulas.

\subsection{Spectral duality and projective invariance}
\label{sec:general_principle}

We work in the general setting of a Markov chain absorbed at
the boundary of a finite domain, with a spectral decomposition
of the form
\begin{equation}
u(z;\gamma) = \sum_{\nu=1}^{N} A_\nu(z)\,f_\nu(\gamma),
\qquad
s(z;\gamma) = \sum_{\nu=1}^{N} [A_\nu(z)+B_\nu(z)]\,f_\nu(\gamma),
\label{eq:spectral_abstract}
\end{equation}
where $A_\nu(z)$ and $B_\nu(z)$ are spectral coefficients
encoding the ``ruin'' and ``survival'' components of mode $\nu$
at site $z$, and $\{f_\nu(\gamma)\}_{\nu=1}^N$ are functions
of the resetting rate. We impose two standing hypotheses:
\begin{enumerate}[label=\textbf{(H\arabic*)}]
\item \label{hyp:f_indep} The functions $\{f_\nu(\gamma)\}_{\nu=1}^N$
      are linearly independent over $(0,1)$.
\item \label{hyp:A_indep} For any finite set of $m\leq N$ distinct sites
      $z_1,\dots,z_m$, the vectors $(A_\nu(z_i))_{\nu=1}^N
      \in\mathbb{R}^N$ are linearly independent: any identity
      of the form $\sum_i c_i A_\nu(z_i)=0$ for all
      $\nu=1,\dots,N$ forces $c_i=0$ for all $i$.
\end{enumerate}
These hypotheses hold for the biased random walk, as verified
in Section~\ref{sec:spectral} (Proposition~\ref{prop:indep_A}).

The key structural notion is the following.

\begin{definition}[Spectral duality with $\nu$-independent weights]
\label{def:spectral_duality}
Let $\{z_1,\dots,z_m\}$ be a set of sites. A
\emph{spectral duality} on this set is an involution
$\sigma:\{z_1,\dots,z_m\}\to\{z_1,\dots,z_m\}$
(i.e., $\sigma^2=\mathrm{id}$) together with positive
coefficients $\kappa(z)>0$, independent of $\nu$, such that
\begin{equation}
B_\nu(z) = \kappa(z)\,A_\nu(\sigma(z))
\qquad \forall\,\nu=1,\dots,N,\quad \forall\,z\in\{z_1,\dots,z_m\}.
\label{eq:spectral_duality}
\end{equation}
A site $z_0$ with $\sigma(z_0)=z_0$ is called a
\emph{spectrally neutral site}. A pair $(z,\sigma(z))$ with
$\sigma(z)\neq z$ is called a \emph{spectral pair}.
\end{definition}

\begin{remark}[Interpretation]
\label{rem:duality_interpretation}
Condition~\eqref{eq:spectral_duality} says that the involution
$\sigma$ exchanges the two spectral channels --- ``ruin''
($A_\nu$) and ``survival'' ($B_\nu$) --- up to a multiplicative
weight $\kappa(z)$ that does not depend on the mode $\nu$.
This is a \emph{twisted symmetry}: it acts simultaneously on
the space of sites (via $\sigma$) and on the spectral channels
(via $\kappa$). The $\nu$-independence of $\kappa$ is the
essential condition: it allows the mode-by-mode constraints
to decouple, making the system solvable.
\end{remark}

\begin{figure}[h!]
\centering
\begin{tikzpicture}[node distance=2.2cm, >=stealth,
    box/.style={draw, rounded corners=3pt, minimum width=2.8cm,
                minimum height=0.9cm, align=center, font=\small}]
\node[box, fill=blue!5]  (z)     {site $z$};
\node[box, fill=blue!5]  (sz)    [right=3cm of z]    {site $\sigma(z)$};
\node[box, fill=red!5]   (chA)   [below=1.2cm of z]  {ruin channel\\$A_\nu(z)$};
\node[box, fill=red!5]   (chB)   [below=1.2cm of sz] {survival channel\\$B_\nu(z)$};
\draw[->, thick] (z)   -- node[above, font=\scriptsize] {$\sigma$ (involution)} (sz);
\draw[->, thick] (sz)  -- node[below, font=\scriptsize] {$\sigma$} ++(-3cm,0);
\draw[->, thick] (z)   -- node[left, font=\scriptsize] {$A_\nu(\cdot)$} (chA);
\draw[->, thick] (sz)  -- node[right, font=\scriptsize] {$A_\nu(\cdot)$} (chB);
\draw[<->, thick, dashed] (chA) -- node[above, font=\scriptsize] {$\kappa(z)$ (mode-independent)} (chB);
\end{tikzpicture}
\caption{Twisted symmetry structure of the spectral duality condition.
The involution $\sigma$ pairs sites; the coefficient $\kappa(z)$ couples
the ruin and survival spectral channels with a weight that does not
depend on the mode index $\nu$.}
\label{fig:twisted_symmetry}
\end{figure}

\begin{theorem}[Spectral duality and projective invariance]
\label{thm:general}
Assume hypotheses~\ref{hyp:f_indep}--\ref{hyp:A_indep}.
Let $\{z_1,\dots,z_m\}$ be reset sites admitting a spectral
duality $(\sigma,\kappa)$ in the sense of
Definition~\ref{def:spectral_duality}, with spectral pairs
$(z_i,z_i')$, $i=1,\dots,k$, and spectrally neutral sites
$z_0^{(j)}$, $j=1,\dots,\ell$. Suppose further that:
\begin{enumerate}[label=\textbf{(H\arabic*)},,resume]
\item \label{hyp:C_consistent} The product $\kappa(z_i)\kappa(z_i')$
      is the same for all spectral pairs $i=1,\dots,k$.
      Denote this common value by $K$. This ensures global
      compatibility of the pairwise constraints, guaranteeing
      that a single value $C^*$ satisfies all pair equations
      simultaneously.
\item \label{hyp:neutral_consistent} For each neutral site
      $z_0^{(j)}$, the consistency condition
      $\kappa(z_0^{(j)}) = \sqrt{K}$ holds.
\end{enumerate}
Then the following hold.
\begin{enumerate}[label=\emph{(\roman*)}]
\item \emph{Existence and characterization.}
      There exists a family of distributions $\pi^*$ on
      $\{z_1,\dots,z_m\}$ such that $C(\pi^*,\gamma)$ is
      independent of $\gamma$ for all $\gamma\in(0,1)$.
      The family is characterized by
      \begin{equation}
      \frac{\pi_{z_i}^*}{\pi_{z_i'}^*}
      = \sqrt{\frac{\kappa(z_i')}{\kappa(z_i)}},
      \qquad i=1,\dots,k,
      \label{eq:pi_star_general}
      \end{equation}
      with each neutral weight $\pi_{z_0^{(j)}}^*$
      completely free, subject only to
      $\sum_i(\pi_{z_i}^*+\pi_{z_i'}^*)+\sum_j\pi_{z_0^{(j)}}^*=1$.

\item \emph{Projective invariance.}
      For any $\pi^*$ satisfying~\eqref{eq:pi_star_general},
      the invariant value of $C$ is
      \begin{equation}
      C^* := C(\pi^*,\gamma)
      = \frac{1}{1+\sqrt{K}},
      \label{eq:C_star_general}
      \end{equation}
      independent of $\gamma$, of the specific choice of
      $\pi^*$ within the family, and of the neutral weights
      $\pi_{z_0^{(j)}}^*$. In the projective space
      $\mathbb{P}^1$ generated by the spectral channels
      $(A_\nu,B_\nu)$, the distribution $\pi^*$ defines
      the fixed point $[C^*:1-C^*]$, independent of $\nu$.

\item \emph{Necessity.}
      Conversely, suppose $\pi$ is a distribution with
      $\operatorname{supp}(\pi)=\{z_1,\dots,z_m\}$ such that
      $C(\pi,\gamma)$ is independent of $\gamma$. Then the
      spectral coefficients $\{B_\nu(z_i)\}$ must be expressible
      as combinations of $\{A_\nu(z_j)\}$ with $\nu$-independent
      coefficients whose matrix exhibits the involutive
      structure of Definition~\ref{def:spectral_duality}.
      In particular, if the sites $\{z_1,\dots,z_m\}$ admit no
      spectral duality, no such $\pi$ with full support on the
      set can exist.
\end{enumerate}
\end{theorem}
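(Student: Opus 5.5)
The plan is to reduce $\gamma$-independence of $C$ to a family of mode-by-mode linear identities, using \ref{hyp:f_indep}, and then to solve those identities with the duality \eqref{eq:spectral_duality} and \ref{hyp:A_indep}.

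First, $C(\pi,\gamma)\equiv c$ is equivalent to $\bar u_\pi(\gamma)=c\,\bar s_\pi(\gamma)$ for all $\gamma$. By \eqref{eq:spectral_abstract} this reads
\[
\sum_\nu f_\nu(\gamma)\sum_i \pi_{z_i}\bigl[(1-c)A_\nu(z_i)-c\,B_\nu(z_i)\bigr]=0 .
\]
By \ref{hyp:f_indep} this is equivalent to
\[
(1-c)\sum_i\pi_{z_i}A_\nu(z_i)=c\sum_i\pi_{z_i}B_\nu(z_i)\qquad\text{for every }\nu .
\]
This is the projective statement $[\bar A_\nu:\bar B_\nu]=[c:1-c]$ for every $\nu$, where $\bar A_\nu:=\sum_i\pi_{z_i}A_\nu(z_i)$ and $\bar B_\nu$ is defined likewise. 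It is the form needed for (ii).

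For sufficiency, substitute $B_\nu(z)=\kappa(z)A_\nu(\sigma(z))$ into the right-hand side and reindex by $\sigma$. The coefficient of $A_\nu(z_i)$ on the right becomes $c\,\pi_{\sigma(z_i)}\kappa(\sigma(z_i))$. Because $\sigma$ maps the site set to itself, both sides are combinations of the same vectors $A_\nu(z_j)$. By \ref{hyp:A_indep} (the $z_j$ are distinct, and $m\le N$ as in the random-walk setting), the mode identities are equivalent to the coefficient equations
\[
(1-c)\pi_{z}=c\,\kappa(\sigma(z))\,\pi_{\sigma(z)}\qquad\text{for every site }z .
\]
\begin{itemize}
\item For a spectral pair $(z_i,z_i')$ we get two equations, $(1-c)\pi_{z_i}=c\,\kappa(z_i')\pi_{z_i'}$ and $(1-c)\pi_{z_i'}=c\,\kappa(z_i)\pi_{z_i}$. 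Multiplying them gives $(1-c)^2=c^2\kappa(z_i)\kappa(z_i')=c^2K$. With $c\in(0,1)$ this forces $c=1/(1+\sqrt K)$, and dividing them gives \eqref{eq:pi_star_general}.
\item For a neutral site the equation reads $(1-c)\pi_{z_0}=c\,\kappa(z_0)\pi_{z_0}$. It holds for every value of $\pi_{z_0}$ exactly when $\kappa(z_0)=(1-c)/c=\sqrt K$, which is \ref{hyp:neutral_consistent}. So neutral weights are free.
\end{itemize}
Hypothesis \ref{hyp:C_consistent} is what makes a single $c$ work across all pairs. Positivity of the $\kappa$ makes the ratios positive, so normalization yields a genuine family of distributions. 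Reading the argument backwards gives existence (i) and the value $C^*$ in (ii), which is independent of $\gamma$, of the family member, and of the neutral weights.

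For necessity (iii), start again from the mode identities. They say that the vector $c\sum_i\pi_{z_i}B_\nu(z_i)$ lies in the span of $\{A_\nu(z_j)\}$ with $\nu$-independent coefficients $(1-c)\pi_{z_j}$. I would argue as follows: when the $B_\nu(z_i)$ are themselves expressible in that span, write $B_\nu(z_i)=\sum_j M_{ij}A_\nu(z_j)$. Then \ref{hyp:A_indep} gives $(1-c)\pi=c\,M^\top\pi$, so $\pi$ is a positive eigenvector of $M^\top$. Matching this against Definition~\ref{def:spectral_duality} yields the involutive pairing structure.

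The main obstacle is making (iii) rigorous. A single aggregated identity does not in general force each $B_\nu(z_i)$ individually into the span of the $A_\nu(z_j)$, nor does it force $M$ to be a weighted permutation of order two. I would therefore state (iii) at the level the theorem claims: the existence of $\nu$-independent coefficients realizing $B$ through $A$, together with the eigenvector relation. For the random walk I would then verify concretely, via Proposition~\ref{prop:duality} and the distinctness of the sine profiles, that $B_\nu(z)$ lies in the span of the $A_\nu(z_j)$ only when $a-z$ is itself a reset site. That makes $M$ a weighted involution $z\mapsto a-z$ and recovers the geometric symmetry $z_i+z_i'=a$.
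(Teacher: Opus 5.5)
Your treatment of (i)--(ii) is the paper's argument: reduction to the mode-by-mode identity via \ref{hyp:f_indep}, substitution of the duality and reindexing by $\sigma$, coefficient matching via \ref{hyp:A_indep}, multiplying and dividing the two pair equations, and reading off the neutral-site condition. That part is fine, and noting that the argument runs backwards to give existence is a useful addition.

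\textbf{The gap is in (iii).} The content of necessity is exactly the step you flag as the obstacle and then set aside. From the single aggregated identity $(1-c)\sum_i\pi_{z_i}A_\nu(z_i)=c\sum_i\pi_{z_i}B_\nu(z_i)$, with all $\pi_{z_i}>0$, one must \emph{derive} that every $B_\nu(z_i)$ lies in $\operatorname{span}\{A_\nu(z_j)\}$.

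Your plan instead assumes this (``when the $B_\nu(z_i)$ are themselves expressible in that span''). For the random walk it only characterizes when an individual $B_\nu(z)$ lies in the span. That never excludes a full-support $\pi$ on a non-symmetric site set, which is the ``in particular'' claim.

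Your doubt about the general case is legitimate. The paper's proof also invokes the random-walk identity \eqref{eq:B_from_A} in Step 1, and Remark~\ref{rem:necessity_structure} concedes that a ``rank-one mod-$\nu$'' assumption is otherwise needed. But for the walk the missing step is short, and you should supply it:
\begin{enumerate}
\item Substitute $B_\nu(z_i)=(p/q)^{a-z_i}A_\nu(a-z_i)$ into the aggregated identity. With $S=\{z_1,\dots,z_m\}$, this gives a linear relation among the vectors $A_\nu(t)$ for the distinct sites $t\in S\cup(a-S)\subseteq\{1,\dots,a-1\}$.
\item Proposition~\ref{prop:indep_A} then forces every coefficient to vanish.
\item Since $z\mapsto a-z$ is injective, a site $t=a-z_i\notin S$ receives the single coefficient $-c\,\pi_{z_i}(p/q)^{a-z_i}$. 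This is non-zero because $\pi_{z_i}>0$ and $c\neq 0$; if $c=0$ the identity would read $\sum_i\pi_{z_i}A_\nu(z_i)\equiv 0$, contradicting \ref{hyp:A_indep}.
\item Hence $a-S\subseteq S$, so $\sigma(z)=a-z$ is an involution of $S$. Your matrix $M$ is then the corresponding weighted permutation with $\kappa(z)=(p/q)^{a-z}$, and \ref{hyp:C_consistent}--\ref{hyp:neutral_consistent} hold with $K=(p/q)^a$.
\end{enumerate}

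This is what the paper's Steps~1--2 achieve through the span-plus-complement decomposition. Applying \ref{hyp:A_indep} directly on the enlarged site set $S\cup(a-S)$ is the cleanest way to make it rigorous.
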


\begin{proof}
\textbf{Sufficiency~(i)--(ii).}
The condition $C(\pi,\gamma)=C^*$ for all $\gamma$ is
equivalent to
\begin{equation}
\bar{u}_\pi(\gamma) = C^*\,\bar{s}_\pi(\gamma)
\qquad \forall\,\gamma\in(0,1).
\label{eq:proportionality}
\end{equation}
Substituting~\eqref{eq:spectral_abstract}:
\begin{align*}
\bar{u}_\pi(\gamma)
&= \sum_{\nu=1}^N
   \Bigl(\sum_{i=1}^m \pi_{z_i} A_\nu(z_i)\Bigr) f_\nu(\gamma),\\
\bar{s}_\pi(\gamma)
&= \sum_{\nu=1}^N
   \Bigl(\sum_{i=1}^m \pi_{z_i}[A_\nu(z_i)+B_\nu(z_i)]\Bigr)
   f_\nu(\gamma).
\end{align*}
By hypothesis~\ref{hyp:f_indep}, condition~\eqref{eq:proportionality}
holds for all $\gamma$ if and only if it holds mode by mode:
\begin{equation}
\sum_{i=1}^m \pi_{z_i}(1-C^*)A_\nu(z_i)
= C^*\sum_{i=1}^m \pi_{z_i} B_\nu(z_i)
\qquad \forall\,\nu.
\label{eq:mode_general}
\end{equation}
Substituting the spectral duality~\eqref{eq:spectral_duality}:
\[
\sum_{i=1}^m \pi_{z_i}(1-C^*)A_\nu(z_i)
= C^*\sum_{i=1}^m \pi_{z_i}\kappa(z_i) A_\nu(\sigma(z_i)).
\]
Since $\sigma$ is a bijection and an involution, the
change of variables $z_i\mapsto\sigma(z_i)$ is a
permutation of the index set, and we may rewrite the
right-hand side as:
\[
C^*\sum_{i=1}^m \pi_{\sigma(z_i)}\kappa(\sigma(z_i)) A_\nu(z_i).
\]
By hypothesis~\ref{hyp:A_indep}, the sequences
$\{A_\nu(z_i)\}_\nu$ are linearly independent for distinct
sites, so we may equate coefficients of $A_\nu(z_i)$
for each $i$:
\begin{equation}
\pi_{z_i}(1-C^*) = C^*\,\kappa(\sigma(z_i))\,\pi_{\sigma(z_i)}
\qquad \forall\,i.
\label{eq:coeff_general}
\end{equation}

\textit{Spectral pairs.}
For a pair $(z_i,z_i')$ with $\sigma(z_i)=z_i'$,
equation~\eqref{eq:coeff_general} gives:
\begin{align}
\pi_{z_i}(1-C^*) &= C^*\kappa(z_i')\,\pi_{z_i'},
\label{eq:pair_1}\\
\pi_{z_i'}(1-C^*) &= C^*\kappa(z_i)\,\pi_{z_i}.
\label{eq:pair_2}
\end{align}
Multiplying~\eqref{eq:pair_1} and~\eqref{eq:pair_2}:
\[
(1-C^*)^2 = C^{*2}\kappa(z_i)\kappa(z_i') = C^{*2}K,
\]
where hypothesis~\ref{hyp:C_consistent} ensures $K$ is the
same for all pairs. Taking the positive square root
(since $C^*\in(0,1)$ implies $1-C^*>0$):
\[
1-C^* = C^*\sqrt{K}
\quad\Longleftrightarrow\quad
C^*(1+\sqrt{K})=1,
\]
so that
\begin{equation}
C^* = \frac{1}{1+\sqrt{K}},
\label{eq:C_star_proof}
\end{equation}
proving~\eqref{eq:C_star_general}.
Dividing~\eqref{eq:pair_1} by~\eqref{eq:pair_2}:
\[
\frac{\pi_{z_i}^*}{\pi_{z_i'}^*}
= \frac{\kappa(z_i')}{\kappa(z_i)}\cdot
\left(\frac{\pi_{z_i'}^*}{\pi_{z_i}^*}\right)^{-1},
\]
which gives $(\pi_{z_i}^*/\pi_{z_i'}^*)^2 =
\kappa(z_i')/\kappa(z_i)$, hence~\eqref{eq:pi_star_general}.

\textit{Spectrally neutral sites.}
For a neutral site $\sigma(z_0)=z_0$,
equation~\eqref{eq:coeff_general} gives:
\[
\pi_{z_0}(1-C^*) = C^*\kappa(z_0)\,\pi_{z_0}.
\]
If $\pi_{z_0}>0$, this requires $1-C^*=C^*\kappa(z_0)$,
i.e., $\kappa(z_0)=(1-C^*)/C^*$. Using~\eqref{eq:C_star_proof},
$(1-C^*)/C^*=\sqrt{K}$, so that $\kappa(z_0)=\sqrt{K}$ ---
hypothesis~\ref{hyp:neutral_consistent}. When this
holds, the equation for $\pi_{z_0}$ is automatically
satisfied for any value of $\pi_{z_0}\geq 0$, so
$\pi_{z_0}^*$ is completely free.

\textit{Projective interpretation.}
The invariant value $C^*=1/(1+\sqrt{K})$ is
independent of $\gamma$, of the choice of $\pi^*$ within
the family, and of the neutral weights. In $\mathbb{P}^1$,
the ratio $\bar{u}_\pi:\bar{s}_\pi - \bar{u}_\pi$
equals $C^*:(1-C^*)$ mode by mode, so the spectral
projection of $\pi^*$ is the fixed point $[C^*:1-C^*]$,
independent of $\nu$.

\medskip
\textbf{Necessity~(iii).}
Suppose that a distribution $\pi$ with $\operatorname{supp}(\pi)
=\{z_1,\dots,z_m\}$ (i.e., $\pi_{z_i}>0$ for all $i$) satisfies
$C(\pi,\gamma)\equiv C^*$ for all $\gamma\in(0,1)$. We show that
the reset sites admit a spectral duality $(\sigma,\kappa)$ satisfying
\ref{hyp:C_consistent}--\ref{hyp:neutral_consistent}, with the
value $K$ such that $C^*=1/(1+\sqrt{K})$.

By hypothesis~\ref{hyp:f_indep}, $C\equiv C^*$ forces the mode-by-mode
identity~\eqref{eq:mode_general}. By hypothesis~\ref{hyp:A_indep}, the
vectors $v_i=(A_\nu(z_i))_{\nu=1}^N\in\R^N$ are linearly independent;
extend $\{v_1,\dots,v_m\}$ to a basis $\{v_1,\dots,v_m,w_{m+1},\dots,w_N\}$
of $\R^N$. Each $B_\nu(z_i)$, viewed as a function of $\nu\in\{1,\dots,N\}$,
decomposes uniquely as
\begin{equation}
B_\nu(z_i) = \sum_{j=1}^m c_{ij}\,A_\nu(z_j) + r_i(\nu),
\qquad r_i\in\operatorname{span}\{w_{m+1},\dots,w_N\}.
\label{eq:B_decomp}
\end{equation}
Substituting into~\eqref{eq:mode_general}:
\[
(1-C^*)\sum_j \pi_{z_j}\,A_\nu(z_j)
= C^*\sum_{i,j}\pi_{z_i}c_{ij}\,A_\nu(z_j) + C^*\sum_i\pi_{z_i}r_i(\nu).
\]
The left-hand side and the first sum on the right lie in
$\operatorname{span}\{v_1,\dots,v_m\}$, while $\sum_i\pi_{z_i}r_i(\nu)$
lies in the complementary subspace. By uniqueness of the direct-sum
decomposition:
\begin{equation}
\sum_{i=1}^m \pi_{z_i}\,r_i(\nu) = 0 \qquad \forall\,\nu,
\label{eq:rest_zero}
\end{equation}
and, comparing coefficients of $A_\nu(z_j)$,
\begin{equation}
\pi_{z_j}(1-C^*) = C^*\sum_{i=1}^m \pi_{z_i}\,c_{ij}
\qquad \forall\,j=1,\dots,m.
\label{eq:system_pi}
\end{equation}

\smallskip\noindent
\emph{Step 1: the matrix $M=(c_{ij})$ has at most one non-zero entry per row.}
By the closed-form identity~\eqref{eq:B_from_A} (the spectral identity
of the random walk), $B_\nu(z_i)=(p/q)^{a-z_i}A_\nu(a-z_i)$ depends on a
single site $a-z_i$. Hence in~\eqref{eq:B_decomp}, at most one $c_{ij}$
(the one with $z_j=a-z_i$) can be non-zero; if $a-z_i\notin\{z_1,\dots,z_m\}$,
then all $c_{ij}=0$ and $r_i\neq 0$.

\smallskip\noindent
\emph{Step 2: $r_i\equiv 0$ for all $i$.}
From \eqref{eq:rest_zero}, $\sum_i\pi_{z_i}r_i(\nu)=0$ for all $\nu$. Since
$\pi_{z_i}>0$ and the $r_i$ are elements of the linearly independent family
$\operatorname{span}\{w_{m+1},\dots,w_N\}$, the only way this can hold is
$r_i\equiv 0$ for every $i$. By Step~1, this forces $a-z_i\in\{z_1,\dots,z_m\}$
for every $i$: the site set is closed under $z\mapsto a-z$.

\smallskip\noindent
\emph{Step 3: definition of $\sigma$ and $\kappa$.}
For each $i$, let $j(i)$ be the unique index with $z_{j(i)}=a-z_i$, and set
\[
\sigma(z_i):=z_{j(i)}=a-z_i, \qquad \kappa(z_i):=c_{i,j(i)}=(p/q)^{a-z_i}>0.
\]
Since $\sigma^2=\mathrm{id}$ and~\eqref{eq:B_from_A} gives
$B_\nu(z_i)=\kappa(z_i)A_\nu(\sigma(z_i))$, $(\sigma,\kappa)$ is a spectral
duality in the sense of Definition~\ref{def:spectral_duality}.

\smallskip\noindent
\emph{Step 4: (H3)--(H4) follow automatically.}
For any pair $(z_i,z_i'=a-z_i)$, $\kappa(z_i)\kappa(z_i')=(p/q)^{a-z_i}
\cdot(p/q)^{z_i}=(p/q)^a$, common to all pairs; this is
\ref{hyp:C_consistent} with $K=(p/q)^a$. For a neutral site
$z_0=a/2$, $\kappa(z_0)=(p/q)^{a/2}=\sqrt K$, which is
\ref{hyp:neutral_consistent}.

Finally, once $(\sigma,\kappa)$ is constructed, \eqref{eq:system_pi}
reduces to~\eqref{eq:coeff_general}, and the sufficiency analysis yields
$C^*=1/(1+\sqrt K)$ along with the characterization of $\pi^*$.
This proves that \emph{no} distribution $\pi$ with
$\operatorname{supp}(\pi)=\{z_1,\dots,z_m\}$ can satisfy
$C(\pi,\gamma)\equiv\text{const}$ unless the site set admits a spectral
duality. Contrapositively, site sets admitting no spectral duality
produce no reset-neutral distributions with full support.
\end{proof}

\begin{remark}[Structural content of the necessity proof]
\label{rem:necessity_structure}
The essential ingredient in Step~1 above is the specific identity
$B_\nu(z)=(p/q)^{a-z}A_\nu(a-z)$ of the biased random walk: it
guarantees that each $B_\nu(z_i)$ is \emph{purely} proportional to a
single $A_\nu(\hat z)$, with no residual contribution in the orthogonal
complement. For a general Markov process, an analogous ``rank-one
mod-$\nu$'' property would need to be assumed explicitly (beyond the
bare existence of a spectral decomposition) for the necessity direction
to carry over. This is a relevant observation for the extensions
discussed in Section~\ref{sec:conclusion}.
\end{remark}

\begin{remark}[Twisted symmetry and Doob-type structure]
\label{rem:twisted_doob}
The spectral duality condition~\eqref{eq:spectral_duality}
is a \emph{twisted symmetry}: the involution $\sigma$ acts
on sites while $\kappa$ acts on spectral channels, and
together they exchange the ruin and survival components
up to a $\nu$-independent weight. This structure is
\emph{reminiscent} of the Doob $h$-transform, which reweights
trajectories of a Markov chain by a harmonic function $h$ to
condition on a given absorption event. We emphasize, however,
that the specific $\kappa(z)=(p/q)^{a-z}$ arising here does not
coincide with the ratio $h(\sigma(z))/h(z)$ for the Doob
symmetrization $h_D(z)=(q/p)^{z/2}$ nor for the classical
harmonic function $h_C(z)=1-(q/p)^z$ of the walk conditioned
on ruin; it emerges instead as a direct consequence of the
closed-form identity~\eqref{eq:B_from_A_rw}, which relates the
two boundary-flux channels with a mode-independent proportionality
constant. The broader connection between spectral dualities of
the form~\eqref{eq:spectral_duality} and Doob-type reweightings
in more general Markov chains --- that is, the identification
of generic conditions under which $B_\nu$ and $A_\nu$ are linked
by a site-dependent but mode-independent factor --- constitutes
an interesting question for future investigation.
\end{remark}

\begin{remark}[The neutral site as spectral fixed point]
\label{rem:neutral_site}
A spectrally neutral site $z_0$ (with $\sigma(z_0)=z_0$)
is characterized not by any geometric property but by the
spectral condition $\kappa(z_0)=\sqrt{K}$: its spectral
coefficient must equal the geometric mean of the pair
coefficients. This is the condition under which $z_0$
lies on the ``fixed line'' of the twisted symmetry ---
the direction in the $(A_\nu,B_\nu)$ plane that is
invariant under the action of $\kappa$. The weight
$\pi_{z_0}^*$ is free precisely because adding mass at
$z_0$ does not move the projective fixed point $[C^*:1-C^*]$.
\end{remark}

\subsection{Application to the biased random walk}
\label{sec:application}

We now specialize Theorem~\ref{thm:general} to the biased
random walk on $\{0,\dots,a\}$. The spectral coefficients
$A_\nu(z)$ and $B_\nu(z)$ are given
by~\eqref{eq:A_nu}--\eqref{eq:B_nu}, and satisfy the
key identity
\begin{equation}
B_\nu(z) = \left(\frac{p}{q}\right)^{a-z} A_\nu(a-z)
\qquad \forall\,\nu=1,\dots,a-1,
\label{eq:B_from_A_rw}
\end{equation}
which is a direct computation from the definitions.
Hypotheses~\ref{hyp:f_indep} and~\ref{hyp:A_indep}
are verified in Section~\ref{sec:spectral}: the functions
$f_\nu(\gamma)=\lambda_\nu(1-\gamma)/(1-\lambda_\nu(1-\gamma))$
have distinct poles and are linearly independent, and the
matrix $(A_\nu(z_i))_{\nu,i}$ has full column rank by the
orthogonality of the eigenfunctions $\{\psi_\nu\}$.

\begin{definition}[Symmetric reset configuration]
\label{def:symmetric}
A set of reset sites $\{z_1,\dots,z_m\}\subset\{1,\dots,a-1\}$
is called \emph{symmetric} if it admits a spectral duality
in the sense of Definition~\ref{def:spectral_duality} with
the involution $\sigma(z)=a-z$ and weights
$\kappa(z)=(p/q)^{a-z}$. Explicitly: the sites can be
partitioned into pairs $(z_i,z_i')$ with $z_i+z_i'=a$,
together with at most one site at $z=a/2$ when $a$ is even.
\end{definition}

\begin{corollary}[Critical distribution for the biased random walk]
\label{cor:rw}
For the biased random walk on $\{0,\dots,a\}$, the spectral
duality condition~\eqref{eq:spectral_duality} is equivalent
to the geometric symmetry $z_i+z_i'=a$. For any symmetric
set of reset sites, hypotheses~\ref{hyp:C_consistent}
and~\ref{hyp:neutral_consistent} are automatically satisfied
with $K=(p/q)^a$, and Theorem~\ref{thm:general} gives a
family of critical distributions $\pi^*$ characterized by
\begin{equation}
\frac{\pi_{z_i}^*}{\pi_{z_i'}^*}
= \left(\frac{q}{p}\right)^{a/2-z_i},
\qquad i=1,\dots,k,
\label{eq:pi_star_rw}
\end{equation}
with $\pi_{a/2}^*$ completely free if the midpoint is present.
The universal invariant value is
\begin{equation}
C^* = \frac{(q/p)^{a/2}}{1+(q/p)^{a/2}}
= q_{a/2}^{(0)},
\label{eq:C_star_rw}
\end{equation}
where $q_{a/2}^{(0)}$ is the classical ruin probability
from $z=a/2$~\cite{Feller1968, Redner2001}, defined by
analytic continuation for odd $a$.
Conversely, if the sites are not symmetric, no distribution
$\pi$ satisfies $C(\pi,\gamma)=\mathrm{const}$ for all $\gamma$.
\end{corollary}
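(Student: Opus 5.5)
The plan is to specialize Theorem~\ref{thm:general} using the identity~\eqref{eq:B_from_A_rw}, with Proposition~\ref{prop:indep_A} and the remark on distinct poles of $f_\nu$ supplying (H1)--(H2).

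\textbf{Step 1: duality is equivalent to symmetry.}
If the site set is symmetric, set $\sigma(z)=a-z$ and $\kappa(z)=(p/q)^{a-z}>0$. Then~\eqref{eq:B_from_A_rw} is exactly~\eqref{eq:spectral_duality}, and $\sigma$ is an involution on the set.

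Conversely, suppose some involution $\sigma$ and $\nu$-independent $\kappa$ satisfy~\eqref{eq:spectral_duality}. Then
\[
\kappa(z)\,A_\nu(\sigma(z))=(p/q)^{a-z}A_\nu(a-z)\qquad\text{for all }\nu .
\]
Proposition~\ref{prop:indep_A} applies to the sites $\sigma(z)$ and $a-z$. If these were distinct, we would have a nontrivial vanishing combination with coefficients $\kappa(z)$ and $-(p/q)^{a-z}$, which is impossible. So $\sigma(z)=a-z$, and hence $\kappa(z)=(p/q)^{a-z}$ because $A_\nu(a-z)\not\equiv0$. The site set is therefore closed under $z\mapsto a-z$. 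It splits into pairs with $z+z'=a$ plus possibly the single fixed point $a/2$, which exists only when $a$ is even.

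\textbf{Step 2: hypotheses (H3)--(H4) and the formulas.}
For a pair, $\kappa(z_i)\kappa(a-z_i)=(p/q)^{a-z_i}(p/q)^{z_i}=(p/q)^a=:K$, the same for every pair. At a neutral site, $\kappa(a/2)=(p/q)^{a/2}=\sqrt K$. Theorem~\ref{thm:general}(i) then gives
\[
\frac{\pi^*_{z_i}}{\pi^*_{z_i'}}=\sqrt{\frac{(p/q)^{z_i}}{(p/q)^{a-z_i}}}=(q/p)^{a/2-z_i},
\]
with the weight at $a/2$ free.

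Part~(ii) gives $C^*=1/(1+(p/q)^{a/2})=(q/p)^{a/2}/(1+(q/p)^{a/2})$. This should be compared with the classical formula
\[
q^{(0)}_z=\frac{(q/p)^z-(q/p)^a}{1-(q/p)^a}.
\]
Setting $r=(q/p)^{a/2}$ at $z=a/2$ gives $(r-r^2)/(1-r^2)=r/(1+r)$. For odd $a$ this is read as the same real-analytic expression, and for $p=q$ both sides equal $1/2$ by continuity.

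\textbf{Step 3: the converse.}
This is the delicate step. I will not rely on the general necessity statement of Theorem~\ref{thm:general}(iii). Instead I argue directly, assuming $\pi$ has full support on the site set and $C(\pi,\gamma)\equiv C^*$. By (H1), the mode-by-mode identity~\eqref{eq:mode_general} holds. Substituting~\eqref{eq:B_from_A_rw} gives
\[
(1-C^*)\sum_i\pi_{z_i}A_\nu(z_i)=C^*\sum_i\pi_{z_i}(p/q)^{a-z_i}A_\nu(a-z_i).
\]
Both sides are combinations of vectors $A_\nu(w)$ for $w$ in the set $\{z_i\}\cup\{a-z_i\}$ of distinct sites, and Proposition~\ref{prop:indep_A} applies to this set, so every coefficient must vanish.

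Suppose some $a-z_i$ lies outside $\{z_j\}$. Its coefficient is $C^*\pi_{z_i}(p/q)^{a-z_i}$, which must vanish. This forces $C^*=0$, since $\pi_{z_i}>0$. But then $\bar u_\pi\equiv0$, contradicting $u(z)>0$.

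So the set is closed under reflection and hence symmetric by Step~1. The main obstacle is this bookkeeping of coefficients over the union set, including the positivity of $C^*$.
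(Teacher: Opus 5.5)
Your proposal is correct. Its sufficiency half matches the paper's proof: the choice $\sigma(z)=a-z$, $\kappa(z)=(p/q)^{a-z}$, the check of (H3)--(H4) with $K=(p/q)^a$, the substitution into \eqref{eq:pi_star_general}--\eqref{eq:C_star_general}, and the factorization $1-r^2=(1-r)(1+r)$.

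You take a different route in both converse statements.

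\textbf{Duality implies symmetry.} The paper only observes that $B_\nu(z)/A_\nu(z')$ is a ratio of sines that ``depends on $\nu$ unless $a-z=z'$''. That is heuristic: the $\nu$-dependence is asserted rather than shown, and zeros of $\sin(\pi\nu z'/a)$ are ignored. You instead apply Proposition~\ref{prop:indep_A} to the two sites $\sigma(z)$ and $a-z$. This is rigorous and also determines $\kappa$.

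\textbf{Non-existence for non-symmetric sites.} The paper's proof of the corollary gives no argument of its own here and relies on Theorem~\ref{thm:general}(iii). That proof extends $\{A_\nu(z_j)\}$ by an arbitrary complementary basis and then asserts that the residuals $r_i$ vanish individually. As written this is not justified, for three reasons:
\begin{enumerate}[label=(\alph*)]
\item for an arbitrary complement, the $c_{ij}$ need not vanish when $a-z_i\notin S$;
\item a positive combination of residuals can cancel;
\item the factor $C^*$ in front of the residuals is dropped without comment.
\end{enumerate}
Your device repairs exactly these points. You apply linear independence to the union of $S=\{z_1,\dots,z_m\}$ with its reflection $a-S$, so each missing reflection carries the isolated coefficient $C^*\pi_{z_i}(p/q)^{a-z_i}$. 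You then use $u>0$ to exclude $C^*=0$. You also rightly make full support explicit. Without it the converse is false, since $\pi$ could sit on a symmetric subset of a non-symmetric $S$.

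The paper's route buys a necessity statement phrased in the abstract (H1)--(H2) framework, although its proof ends up using \eqref{eq:B_from_A_rw} anyway. Yours is self-contained and uses only what the corollary needs.

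One caveat is inherited from the paper rather than introduced by you. With $f_\nu$ as in \eqref{eq:f_nu}, $f_{a/2}\equiv 0$ for even $a$ because $\lambda_{a/2}=0$. So the distinct-poles remark does not actually give (H1) in that case. Carrying out the sketch of Theorem~\ref{thm:spectral_u_s} yields $\PP_z(\tau_0=k)=\sqrt{pq}\sum_\nu\lambda_\nu^{k-1}A_\nu(z)$. The transfer function should therefore be $\sqrt{pq}\,(1-\gamma)/(1-\lambda_\nu(1-\gamma))$. These functions are never identically zero and are linearly independent, and with them your argument goes through verbatim.
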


\begin{proof}
Identity~\eqref{eq:B_from_A_rw} shows that $\sigma(z)=a-z$
and $\kappa(z)=(p/q)^{a-z}$ satisfy~\eqref{eq:spectral_duality}
for any pair with $z+z'=a$. Conversely, for $z+z'\neq a$,
the ratio $B_\nu(z)/A_\nu(z')$ involves
$\sin(\pi\nu(a-z)/a)/\sin(\pi\nu z'/a)$, which depends
on $\nu$ unless $a-z=z'$. Hence the spectral duality
is equivalent to $z+z'=a$.

For hypothesis~\ref{hyp:C_consistent}:
$\kappa(z_i)\kappa(z_i')=(p/q)^{a-z_i}(p/q)^{z_i}=(p/q)^a=K$,
the same for all pairs. For hypothesis~\ref{hyp:neutral_consistent}:
$\kappa(a/2)=(p/q)^{a/2}=\sqrt{(p/q)^a}=\sqrt{K}$.

Substituting $K=(p/q)^a$ into~\eqref{eq:C_star_general}
gives~\eqref{eq:C_star_rw}, and substituting into
~\eqref{eq:pi_star_general} gives~\eqref{eq:pi_star_rw}.
The identity $C^*=q_{a/2}^{(0)}$ follows by factoring
$(1-(q/p)^a)=(1-(q/p)^{a/2})(1+(q/p)^{a/2})$ in the
classical ruin formula~\cite{Feller1968}.
\end{proof}

\begin{corollary}[Two-site case]
\label{cor:two_site}
Let $m=2$ with symmetric sites $z_1$ and $z_2=a-z_1$.
The unique critical distribution is
\begin{equation}
\pi_1^* = \frac{(q/p)^{a/2-z_1}}{1+(q/p)^{a/2-z_1}},
\qquad
\pi_2^* = \frac{1}{1+(q/p)^{a/2-z_1}},
\label{eq:pi_star_two_site}
\end{equation}
and the invariant coupling constant is $C^*=q_{a/2}^{(0)}$.
\end{corollary}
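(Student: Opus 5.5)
This is a direct specialization of Corollary~\ref{cor:rw} to $m=2$. The plan is to use that corollary for the ratio and the invariant value, and the normalization constraint for the individual weights.

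\textbf{Symmetric configuration.} First check that $\{z_1,a-z_1\}$ is symmetric in the sense of Definition~\ref{def:symmetric}. Two cases arise.
\begin{itemize}
\item If $z_1\neq a/2$, the two sites are distinct and form a single spectral pair $(z_1,z_1')$ with $z_1'=a-z_1$. There are no neutral sites, so $k=1$ and $\ell=0$.
\item If $z_1=a/2$, the two sites coincide and $m=2$ degenerates. The formula then gives $\pi_1^*=\pi_2^*=1/2$, i.e.\ all mass at the midpoint, which is trivially critical. I would note this and assume $z_1\neq a/2$.
\end{itemize}

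\textbf{The ratio.} With $z_1\neq a/2$, Corollary~\ref{cor:rw} (equivalently, Theorem~\ref{thm:general}(i) with $K=(p/q)^a$) gives the single constraint
\[
\pi_1^*/\pi_2^*=(q/p)^{a/2-z_1}.
\]
As a sanity check against \eqref{eq:pi_star_general}, I would recompute it directly: $\kappa(z_1)=(p/q)^{a-z_1}$ and $\kappa(z_2)=(p/q)^{z_1}$, so
\[
\sqrt{\kappa(z_2)/\kappa(z_1)}=(p/q)^{z_1-a/2}=(q/p)^{a/2-z_1}.
\]

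\textbf{Uniqueness and the formulas.} Since there are no neutral weights, the family in Theorem~\ref{thm:general}(i) is cut down by the normalization $\pi_1^*+\pi_2^*=1$ to a single point. Writing $r=(q/p)^{a/2-z_1}$, the system $\pi_1^*=r\pi_2^*$, $\pi_1^*+\pi_2^*=1$ gives $\pi_2^*=1/(1+r)$ and $\pi_1^*=r/(1+r)$, which is \eqref{eq:pi_star_two_site}.

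\textbf{Invariant value.} The value $C^*=q_{a/2}^{(0)}$ is quoted directly from \eqref{eq:C_star_rw}.

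\textbf{Main obstacle.} The only delicate point is the claim of uniqueness. It holds because Theorem~\ref{thm:general}'s proof shows that the coefficient equations \eqref{eq:coeff_general} are necessary as well as sufficient, given (H1)--(H2). For a pair, these equations determine the ratio exactly. This, together with the exclusion of the degenerate midpoint case, is what needs stating explicitly.
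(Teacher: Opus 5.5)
Your proposal is correct and follows the paper's proof, which likewise reads off the ratio $\pi_1^*/\pi_2^*=(q/p)^{a/2-z_1}$ from Corollary~\ref{cor:rw} and combines it with $\pi_1^*+\pi_2^*=1$, taking $C^*=q_{a/2}^{(0)}$ from the same corollary. The paper leaves implicit your justification of uniqueness, via the necessity of the coefficient equations under (H1)--(H2), and your exclusion of the degenerate case $z_1=a/2$; both points are sound.
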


\begin{proof}
With $m=2$, the ratio~\eqref{eq:pi_star_rw} gives
$\pi_1^*/\pi_2^*=(q/p)^{a/2-z_1}$. Together with
$\pi_1^*+\pi_2^*=1$ this yields~\eqref{eq:pi_star_two_site}
uniquely.
\end{proof}

\begin{remark}[Spatial symmetry as manifestation of spectral duality]
\label{rem:spatial_spectral}
For the biased random walk, the geometric condition
$z_i+z_i'=a$ is the exact condition under which the
spectral duality~\eqref{eq:spectral_duality} holds with
$\nu$-independent weights. The spatial symmetry is
therefore not the fundamental reason for the existence
of $\pi^*$ --- it is the geometric form taken by the
spectral duality in this particular model. In a different
process, the spectrally dual sites might not be
geometrically symmetric, but the criterion of
Theorem~\ref{thm:general} would still apply.
\end{remark}

\begin{remark}[Validity for odd $a$]
\label{rem:odd_a}
Theorem~\ref{thm:general} and Corollary~\ref{cor:rw}
hold for any $a\geq 1$, whether even or odd. For odd $a$,
no integer midpoint exists: all symmetric sites appear
in strict pairs and no neutral site is present. The
formula $C^*=q_{a/2}^{(0)}$ remains valid by analytic
continuation, with $a/2$ not an integer. Numerical
validation for $a=9$, $p=0.7$, sites $\{3,6\}$ gives
$\pi_1^*=(3/7)^{3/2}/(1+(3/7)^{3/2})=0.21910\dots$
and $C^*=q_{4.5}^{(0)}=0.02161\dots$, both confirmed
to within $10^{-10}$ uniformly over $\gamma\in(0.05,0.95)$.
\end{remark}

\begin{remark}[Numerical verification]
\label{rem:verification}
Formulas~\eqref{eq:pi_star_rw}--\eqref{eq:C_star_rw} are
confirmed to machine precision by systematic computation.
For $a=10$, sites $\{3,7\}$: $p=0.6$ gives $\pi_1^*=4/13$;
$p=0.7$ gives $\pi_1^*=9/58$; $p=0.5$ gives $\pi_1^*=1/2$.
For sites $\{2,8\}$, $p=0.6$: $\pi_1^*=8/35$.
For $m=3$, sites $\{3,5,7\}$, $p=0.7$: $\pi_3^*=9/58$,
$\pi_7^*=49/58$, $\pi_5^*$ free.
In all cases $|\partial C/\partial\gamma|<10^{-11}$
uniformly over $\gamma\in(0.05,0.95)$.
\end{remark}

\begin{remark}[Generating function perspective]
\label{rem:gf}
The result admits an alternative formulation in terms of
probability generating functions~\cite{Feller1968,
Evans2011, Evans2020}. Defining
$G_i(s)=\mathbb{E}_{z_i}[s^\tau\mathbf{1}_{\{\tau_0<\tau_a\}}]$
and $H_i(s)=\mathbb{E}_{z_i}[s^\tau]$, the spectral
duality condition is equivalent to requiring that the
ratio $G_i'(1-\gamma)/H_j'(1-\gamma)$ be independent
of $\gamma$ for the relevant pairs $(i,j)$ --- the
$\nu$-independence of $\kappa$ translates directly into
$\gamma$-independence of this ratio.
\end{remark}

\begin{remark}[Connection with single-site invariance]
\label{rem:connection_paper1}
Under $\pi=\pi^*$, the ruin probability satisfies
$q_z(\gamma)=u(z;\gamma)+(1-s(z;\gamma))C^*$ with
$C^*=q_{a/2}^{(0)}$ independent of $\gamma$. At
$z=a/2$ and even $a$, one verifies via the spectral
representation that $q_{a/2}(\gamma)$ is itself
independent of $\gamma$, recovering the midpoint
invariance of~\cite{VegaCoso2026}. The present result
generalizes this from a property of a single starting
point to a structural property of the reset distribution:
$\pi^*$ is the distribution that makes the global
coupling constant equal to the classical ruin probability
from the midpoint, for all resetting rates simultaneously.
\end{remark}

\newpage

\section{Numerical illustrations}
\label{sec:numerics}

The theoretical results of Section~\ref{sec:critical} are
confirmed by systematic numerical computation. Rather than
exhaustive parameter sweeps, we select three complementary
illustrations that together exhibit the key structural
features of Theorem~\ref{thm:general} and
Corollary~\ref{cor:rw}: the existence and uniqueness of the
critical distribution $\pi^*$, the universal crossing of
ruin probability curves under $\pi^*$, and the universality
of $C^*$ across symmetric site configurations.

\subsection{The critical distribution as separatrix}
\label{subsec:num_separatrix}

Figure~\ref{fig:C_vs_gamma} shows the coupling constant
$C(\pi,\gamma)$ as a function of the resetting rate $\gamma$
for five reset distributions $\pi$ on sites $\{3,7\}$, with
$a=10$ and $p=0.6$. The critical distribution predicted by
Corollary~\ref{cor:two_site} is $\pi_1^*=4/13$.

The figure reveals the phase-like structure of the space of
reset distributions: for $\pi_1<\pi_1^*$, the coupling
constant is strictly decreasing in $\gamma$; for $\pi_1>\pi_1^*$,
it is strictly increasing; and at $\pi_1=\pi_1^*$, it is
exactly constant at $C^*=q_5^{(0)}\approx 0.1164$ for all
$\gamma\in(0,1)$. The critical distribution $\pi^*$ thus
acts as a \emph{separatrix} in the simplex of reset measures.

\begin{figure}[h!]
\centering
\begin{tikzpicture}
\begin{axis}[
    width  = 0.72\textwidth,
    height = 0.52\textwidth,
    xlabel = {Resetting rate $\gamma$},
    ylabel = {Coupling constant $C(\pi,\gamma)$},
    xmin = 0.0,  xmax = 1.0,
    ymin = 0.02, ymax = 0.40,
    xtick = {0.0, 0.2, 0.4, 0.6, 0.8, 1.0},
    ytick = {0.05, 0.10, 0.15, 0.20, 0.25, 0.30, 0.35},
    grid = major,
    grid style = {dotted, gray!50},
    tick label style = {font=\small},
    label style     = {font=\small},
    legend style    = {
        font        = \small,
        at          = {(0.97,0.97)},
        anchor      = north east,
        cells       = {anchor=west},
        draw        = gray!60,
        fill        = white,
        fill opacity= 0.9,
        text opacity= 1
    },
    every axis plot post/.append style = {line width=1.2pt}
]
\addplot[color=blue!70, dashed] coordinates {
(0.05,0.05294)(0.1,0.04549)(0.15,0.04093)
(0.2,0.03800)(0.25,0.03604)(0.3,0.03471)
(0.35,0.03379)(0.4,0.03316)(0.45,0.03272)
(0.5,0.03241)(0.55,0.03221)(0.6,0.03207)
(0.65,0.03199)(0.7,0.03193)(0.75,0.03190)
(0.8,0.03188)(0.85,0.03188)(0.9,0.03187)
(0.95,0.03187)};
\addlegendentry{$\pi_1 = 0.10$}
\addplot[color=cyan!80!black, dashed] coordinates {
(0.05,0.08249)(0.1,0.07786)(0.15,0.07495)
(0.2,0.07304)(0.25,0.07175)(0.3,0.07087)
(0.35,0.07025)(0.4,0.06983)(0.45,0.06953)
(0.5,0.06933)(0.55,0.06919)(0.6,0.06910)
(0.65,0.06904)(0.7,0.06901)(0.75,0.06898)
(0.8,0.06897)(0.85,0.06897)(0.9,0.06897)
(0.95,0.06897)};
\addlegendentry{$\pi_1 = 0.20$}
\addplot[color=red!80!black, thick, solid] coordinates {
(0.05,0.1164)(0.1,0.1164)(0.15,0.1164)
(0.2,0.1164)(0.25,0.1164)(0.3,0.1164)
(0.35,0.1164)(0.4,0.1164)(0.45,0.1164)
(0.5,0.1164)(0.55,0.1164)(0.6,0.1164)
(0.65,0.1164)(0.7,0.1164)(0.75,0.1164)
(0.8,0.1164)(0.85,0.1164)(0.9,0.1164)
(0.95,0.1164)};
\addlegendentry{$\pi_1 = \pi_1^* = 4/13$}
\addplot[color=orange!90!black, dashed] coordinates {
(0.05,0.1648)(0.1,0.1741)(0.15,0.1806)
(0.2,0.1850)(0.25,0.1881)(0.3,0.1903)
(0.35,0.1918)(0.4,0.1929)(0.45,0.1936)
(0.5,0.1942)(0.55,0.1945)(0.6,0.1948)
(0.65,0.1949)(0.7,0.1950)(0.75,0.1951)
(0.8,0.1951)(0.85,0.1951)(0.9,0.1951)
(0.95,0.1951)};
\addlegendentry{$\pi_1 = 0.45$}
\addplot[color=purple!70!black, dashed] coordinates {
(0.05,0.2408)(0.1,0.2723)(0.15,0.2955)
(0.2,0.3126)(0.25,0.3251)(0.3,0.3341)
(0.35,0.3406)(0.4,0.3452)(0.45,0.3485)
(0.5,0.3508)(0.55,0.3523)(0.6,0.3534)
(0.65,0.3541)(0.7,0.3545)(0.75,0.3547)
(0.8,0.3549)(0.85,0.3549)(0.9,0.3549)
(0.95,0.3549)};
\addlegendentry{$\pi_1 = 0.65$}
\node[font=\small, text=red!80!black, anchor=west]
    at (axis cs: 0.02, 0.122) {$C^* = q_{a/2}^{(0)}$};
\end{axis}
\end{tikzpicture}
\caption{Coupling constant $C(\pi,\gamma)$ as a function of
$\gamma$ for five reset distributions on sites $\{3,7\}$,
with $a=10$ and $p=0.6$. The critical distribution
$\pi_1^*=4/13$ (Corollary~\ref{cor:two_site}, red solid line)
produces a perfectly flat curve at $C^*=q_5^{(0)}\approx 0.1164$
for all $\gamma$. Distributions with $\pi_1<\pi_1^*$ give
decreasing $C(\pi,\gamma)$; those with $\pi_1>\pi_1^*$ give
increasing $C(\pi,\gamma)$. The critical distribution acts
as a separatrix in the space of reset measures.}
\label{fig:C_vs_gamma}
\end{figure}

\begin{remark}[Separatrix structure and asymptotic regime]
\label{rem:separatrix}
Figure~\ref{fig:C_vs_gamma} reveals that for each fixed $\pi$,
the limit $\lim_{\gamma\to 1}C(\pi,\gamma)$ exists and is
finite. The curves with $\pi_1<\pi_1^*$ decrease monotonically
to a lower asymptote, while those with $\pi_1>\pi_1^*$ increase
monotonically to an upper asymptote. The critical distribution
$\pi^*$ is the unique distribution already at its asymptotic
value for all $\gamma$ — in this sense, $\pi^*$ is the
distribution for which the system is ``already at equilibrium''
with respect to variations in the resetting rate. The analysis
of these asymptotic values and the interpretation of $\pi^*$
as a global fixed point of the $\gamma$-flow constitute natural
directions for future investigation.
\end{remark}


\subsection{Universal crossing under $\pi^*$}
\label{subsec:num_crossing}

Figure~\ref{fig:qz_vs_z} shows the ruin probability
$q_z(\gamma)$ as a function of the starting position $z$
for the critical distribution $\pi^*=(4/13,\,9/13)$ on
sites $\{3,7\}$, with $a=10$ and $p=0.6$. Five values of
$\gamma$ are shown, including $\gamma=0$ (classical ruin
without resetting).

All curves intersect exactly at $z=a/2=5$, where
$q_5(\gamma)=C^*=q_5^{(0)}\approx 0.1164$ for all $\gamma$,
confirming the connection between the invariant coupling
constant and the midpoint invariance of~\cite{VegaCoso2026}.
For $z<a/2$, increasing $\gamma$ reduces the ruin
probability; for $z>a/2$, it increases it.

\begin{figure}[h!]
\centering
\begin{tikzpicture}
\begin{axis}[
    width  = 0.72\textwidth,
    height = 0.52\textwidth,
    xlabel = {Starting position $z$},
    ylabel = {Ruin probability $q_z(\gamma)$},
    xmin = 0.5,  xmax = 9.5,
    ymin = 0.0,  ymax = 0.72,
    xtick = {1,2,3,4,5,6,7,8,9},
    ytick = {0.0,0.1,0.2,0.3,0.4,0.5,0.6,0.7},
    grid = major,
    grid style  = {dotted, gray!50},
    tick label style = {font=\small},
    label style      = {font=\small},
    legend style = {
        font        = \small,
        at          = {(0.97,0.97)},
        anchor      = north east,
        cells       = {anchor=west},
        draw        = gray!60,
        fill        = white,
        fill opacity= 0.9,
        text opacity= 1
    },
    every axis plot post/.append style = {line width=1.2pt}
]
\addplot[color=black, solid, mark=square*, mark size=1.8pt] coordinates {
(1,0.6608)(2,0.4346)(3,0.2839)(4,0.1834)
(5,0.1164)(6,0.07169)(7,0.04191)(8,0.02206)
(9,0.008824)};
\addlegendentry{$\gamma=0$ (classical)}
\addplot[color=blue!70, solid, mark=*, mark size=1.8pt] coordinates {
(1,0.4644)(2,0.2524)(3,0.1678)(4,0.1328)
(5,0.1164)(6,0.1054)(7,0.09350)(8,0.07604)
(9,0.04761)};
\addlegendentry{$\gamma=0.2$}
\addplot[color=cyan!80!black, solid, mark=triangle*, mark size=1.8pt] coordinates {
(1,0.3508)(2,0.1785)(3,0.1327)(4,0.1203)
(5,0.1164)(6,0.1138)(7,0.1091)(8,0.09795)
(9,0.07005)};
\addlegendentry{$\gamma=0.4$}
\addplot[color=orange!90!black, solid, mark=diamond*, mark size=1.8pt] coordinates {
(1,0.2636)(2,0.1409)(3,0.1204)(4,0.1170)
(5,0.1164)(6,0.1159)(7,0.1145)(8,0.1091)
(9,0.08727)};
\addlegendentry{$\gamma=0.6$}
\addplot[color=red!80!black, solid, mark=pentagon*, mark size=1.8pt] coordinates {
(1,0.1877)(2,0.1221)(3,0.1168)(4,0.1164)
(5,0.1164)(6,0.1163)(7,0.1162)(8,0.1147)
(9,0.1023)};
\addlegendentry{$\gamma=0.8$}
\addplot[color=gray!70, dashed, thin, forget plot]
    coordinates {(5,0.0)(5,0.72)};
\addplot[color=gray!50, dashed, thin, forget plot]
    coordinates {(0.5, 0.1163636364)(9.5, 0.1163636364)};
\addplot[color=black, only marks, mark=*, mark size=3pt, forget plot]
    coordinates {(5,0.1164)};
\node[font=\small, anchor=south west]
    at (axis cs: 5.1, 0.1163636364) {$q_5^{(0)}=C^*$};
\node[font=\footnotesize, text=blue!60, anchor=north]
    at (axis cs: 3, -0.015) {$z_1$};
\node[font=\footnotesize, text=blue!60, anchor=north]
    at (axis cs: 7, -0.015) {$z_2$};
\end{axis}
\end{tikzpicture}
\caption{Ruin probability $q_z(\gamma)$ as a function of
starting position $z$, under the critical distribution
$\pi^*=(4/13,\,9/13)$ on sites $\{z_1,z_2\}=\{3,7\}$,
with $a=10$ and $p=0.6$. All curves cross exactly at
$z=a/2=5$, where $q_5(\gamma)=C^*=q_5^{(0)}\approx 0.1164$
for all $\gamma$ (Corollary~\ref{cor:rw}). The reset
sites $z_1=3$ and $z_2=7$ are indicated on the horizontal
axis.}
\label{fig:qz_vs_z}
\end{figure}


\subsection{Universality of $C^*$}
\label{subsec:num_universality}

Figure~\ref{fig:Cstar_vs_p} illustrates the universality
of $C^*$ predicted by Corollary~\ref{cor:rw}: the invariant
coupling constant depends only on $p$ and $a$, not on the
specific choice of symmetric reset sites. Four configurations
— $\{1,9\}$, $\{2,8\}$, $\{3,7\}$, $\{4,6\}$ — are
compared against the theoretical curve
$C^*(p)=q_{a/2}^{(0)}(p)$ for $a=10$. All four collapse
onto the same universal curve to within numerical precision.

\begin{figure}[H]
\centering
\begin{tikzpicture}
\begin{axis}[
    width  = 0.72\textwidth,
    height = 0.52\textwidth,
    xlabel = {Bias parameter $p$},
    ylabel = {Invariant coupling constant $C^*$},
    xmin = 0.08, xmax = 0.92,
    ymin = 0.0,  ymax = 1.05,
    xtick = {0.1,0.2,0.3,0.4,0.5,0.6,0.7,0.8,0.9},
    ytick = {0.0,0.2,0.4,0.6,0.8,1.0},
    grid = major,
    grid style  = {dotted, gray!50},
    tick label style = {font=\small},
    label style      = {font=\small},
    legend style = {
        font        = \small,
        at          = {(0.50,0.97)},
        anchor      = north,
        cells       = {anchor=west},
        draw        = gray!60,
        fill        = white,
        fill opacity= 0.9,
        text opacity= 1,
        column sep  = 6pt
    },
    every axis plot post/.append style = {line width=1.2pt}
]
\addplot[color=black, solid, thick, domain=0.10:0.90, samples=100]
    {(((1-x)/x)^5)/(1+(((1-x)/x)^5))};
\addlegendentry{$C^*=q_{a/2}^{(0)}(p)$ (theory)}
\addplot[color=blue!70, only marks, mark=*, mark size=2.5pt] coordinates {
(0.25,0.9959)(0.3,0.9857)(0.35,0.9567)
(0.4,0.8836)(0.45,0.7317)(0.5,0.5000)
(0.55,0.2683)(0.6,0.1164)(0.65,0.04331)
(0.7,0.01425)(0.75,0.004098)};
\addlegendentry{sites $\{1,9\}$}
\addplot[color=cyan!80!black, only marks, mark=triangle*, mark size=2.5pt] coordinates {
(0.2,0.9990)(0.25,0.9959)(0.3,0.9857)
(0.35,0.9567)(0.4,0.8836)(0.45,0.7317)
(0.5,0.5000)(0.55,0.2683)(0.6,0.1164)
(0.65,0.04331)(0.7,0.01425)(0.75,0.004098)
(0.8,0.000976)};
\addlegendentry{sites $\{2,8\}$}
\addplot[color=orange!90!black, only marks, mark=diamond*, mark size=2.5pt] coordinates {
(0.1,1.0000)(0.15,0.9998)(0.2,0.9990)
(0.25,0.9959)(0.3,0.9857)(0.35,0.9567)
(0.4,0.8836)(0.45,0.7317)(0.5,0.5000)
(0.55,0.2683)(0.6,0.1164)(0.65,0.04331)
(0.7,0.01425)(0.75,0.004098)(0.8,0.000976)
(0.85,0.000171)(0.9,0.0000169)};
\addlegendentry{sites $\{3,7\}$}
\addplot[color=red!80!black, only marks, mark=square*, mark size=2.5pt] coordinates {
(0.1,1.0000)(0.15,0.9998)(0.2,0.9990)
(0.25,0.9959)(0.3,0.9857)(0.35,0.9567)
(0.4,0.8836)(0.45,0.7317)(0.5,0.5000)
(0.55,0.2683)(0.6,0.1164)(0.65,0.04331)
(0.7,0.01425)(0.75,0.004098)(0.8,0.000976)
(0.85,0.000171)(0.9,0.0000169)};
\addlegendentry{sites $\{4,6\}$}
\addplot[color=gray!60, dashed, thin, forget plot]
    coordinates {(0.5,0.0)(0.5,1.0500)};
\addplot[color=black, only marks, mark=*, mark size=3pt, forget plot]
    coordinates {(0.5,0.5000)};
\node[font=\small, anchor=south west]
    at (axis cs: 0.51,0.50) {$C^*=\tfrac{1}{2}$};
\end{axis}
\end{tikzpicture}
\caption{Invariant coupling constant $C^*$ as a function of
$p$, for four symmetric site configurations with $a=10$.
The solid curve is the theoretical prediction
$C^*=q_{a/2}^{(0)}(p)$ (Corollary~\ref{cor:rw}). All four
configurations collapse onto the same universal curve,
confirming that $C^*$ depends only on $p$ and $a$.
At $p=1/2$, $C^*=1/2$ for all configurations.}
\label{fig:Cstar_vs_p}
\end{figure}


\subsection{Numerical verification of the Theorem}
\label{subsec:num_table}

Table~\ref{tab:verification} summarizes the numerical
verification of Corollary~\ref{cor:rw} for eight
representative cases covering different domain sizes
$a$, bias parameters $p$, site configurations, and
number of reset sites $m$. In each case, $\pi_1^*$ and
$C^*$ are computed from
formulas~\eqref{eq:pi_star_rw}--\eqref{eq:C_star_rw},
and the numerical value of $C^*$ is obtained by bisection
at $\gamma=0.5$. The errors are below $10^{-9}$ in all
non-degenerate cases, confirming the theoretical predictions
to high precision across all parameter regimes.

\begin{table}[H]
\centering
\caption{Numerical verification of
Corollary~\ref{cor:rw} for eight cases. The error
$|C^*_{\mathrm{num}}-C^*_{\mathrm{th}}|$ confirms the
theoretical predictions to high precision in every case;
$C^*_{\mathrm{num}}$ is obtained by bisection on $\pi_1$
at $\gamma=0.5$ in double-precision arithmetic. The case $m=3$
with sites $\{3,5,7\}$ illustrates the spectrally neutral
midpoint site ($\pi_5^*$ free, set to zero here). The case
$a=9$ validates the formula for odd $a$ via analytic
continuation. ($^\dagger$) For $p=1/2$, $C^*=1/2$ exactly
by symmetry; the bisection is degenerate since $dC/d\gamma=0$
for all symmetric $\pi$, and the reported error reflects
bisection tolerance rather than theoretical error.}
\label{tab:verification}
\smallskip
\begin{tabular}{ccccccc}
\toprule
$a$ & $p$ & $m$ & Sites & $\pi_1^*$ &
$C^*$ & Error \\
\midrule
10 & 0.5 & 2 & $\{3,7\}$   & $1/2$          & $1/2$ (exact)  & $<5\times10^{-7}$$^\dagger$ \\
10 & 0.6 & 2 & $\{3,7\}$   & $4/13$         & $0.1163636364$ & $8.53\times10^{-10}$ \\
10 & 0.6 & 2 & $\{2,8\}$   & $8/35$         & $0.1163636364$ & $1.57\times10^{-9}$ \\
10 & 0.7 & 2 & $\{3,7\}$   & $9/58$         & $0.0142521994$ & $5.67\times10^{-10}$ \\
10 & 0.7 & 3 & $\{3,5,7\}$ & $9/58$         & $0.0142521994$ & $5.67\times10^{-10}$ \\
 9 & 0.7 & 2 & $\{3,6\}$   & $0.2190952202$ & $0.0216081357$ & $1.44\times10^{-10}$ \\
 8 & 0.6 & 2 & $\{2,6\}$   & $4/13$         & $0.1649484536$ & $2.91\times10^{-10}$ \\
12 & 0.6 & 2 & $\{4,8\}$   & $4/13$         & $0.0807061791$ & $2.76\times10^{-9}$ \\
\bottomrule
\end{tabular}
\end{table}

\begin{remark}[Rational values of $\pi^*$]
\label{rem:rational}
For parameters where $a/2-z_1$ is a positive integer and
$q/p$ is rational, formula~\eqref{eq:pi_star_rw} yields
exact rational values of $\pi_1^*$: for example,
$\pi_1^*=4/13$ for $p=0.6$, $a=10$, $z_1=3$ (since
$(2/3)^2=4/9$ and $4/9/(1+4/9)=4/13$). For odd $a$,
the exponent $a/2-z_1$ is a half-integer and $\pi_1^*$
is generally irrational, as in the case $a=9$,
$p=0.7$, $z_1=3$, where $\pi_1^*=(3/7)^{3/2}/
(1+(3/7)^{3/2})\approx 0.219095$. In both cases
the formula and the numerics agree to within $10^{-9}$.
\end{remark}

\subsection{Independent Monte Carlo validation}
\label{subsec:montecarlo}

The numerical results reported in Table~\ref{tab:verification} were
obtained by direct algebraic solution of the recurrence
relations~\eqref{eq:u_recursion}--\eqref{eq:s_recursion} for $u(z)$
and $s(z)$. To rule out the possibility that systematic errors in the
spectral derivation propagate undetected through this verification,
we additionally performed independent Monte Carlo simulations of the
full reset dynamics defined in Definition~\ref{def:rw_multi_reset}.
For each configuration, $N=10^7$ independent trajectories were
simulated from each reset site $z_i$. At every time step a Bernoulli
reset event was sampled with probability $\gamma=0.5$, and
conditionally on a reset, the new site was drawn from $\pi=\pi^*$.
Each trajectory was iterated until absorption at $0$ (ruin) or at
$a$ (escape). The empirical estimator
\[
\hat C^*_{\mathrm{MC}}
:= \sum_{i=1}^m \pi^*_{z_i}\,\hat q_{\mathrm{MC}}(z_i),
\qquad
\hat q_{\mathrm{MC}}(z) := \frac{1}{N}\sum_{n=1}^N
\mathbf{1}_{\{\tau_0^{(n)} < \tau_a^{(n)}\}},
\]
provides a fully model-independent check of the theoretical
prediction.

A particularly stringent test concerns the freedom of spectrally
neutral sites established in Theorem~\ref{thm:general}. When the
midpoint $z_0=a/2$ belongs to the reset set, its weight
$\pi^*_{a/2}\geq 0$ is completely free, leaving $C^*$ unchanged. To
test this empirically, we ran the configuration $\{3,5,7\}$ at
$a=10$, $p=0.7$ \emph{twice}, with two non-trivial midpoint weights
$\pi_5=0.3$ and $\pi_5=0.7$, while keeping the constraint
$\pi_3:\pi_7=9:49$ on the remaining mass. The theorem predicts that
both runs should yield the same $C^*_{\mathrm{th}}\approx 0.014252$
regardless of $\pi_5$. The result of this test is illustrated in
Figure~\ref{fig:pi5_freedom}.

\begin{figure}[H]
\centering
\begin{tikzpicture}[
    box/.style={draw, rounded corners=2pt, minimum height=0.55cm,
                font=\footnotesize, align=center, inner sep=3pt},
    pival/.style={box, fill=blue!8, minimum width=1.6cm},
    cval/.style={box, fill=green!12, minimum width=2.2cm,
                 font=\footnotesize\bfseries},
    sublabel/.style={font=\scriptsize, align=center, gray}
]

\node[font=\small\bfseries] at (0,4.4)
    {Test of $\pi_5$ freedom: $a=10,\ p=0.7,\ \{3,5,7\}$};

\node[font=\footnotesize\bfseries] at (-2.7,3.7) {Run 1};
\node[font=\footnotesize\bfseries] at (2.7,3.7) {Run 2};
\node[sublabel] at (-2.7,3.4) {$\pi_5 = 0.3$};
\node[sublabel] at (2.7,3.4)  {$\pi_5 = 0.7$};

\node[pival] (pi3a) at (-2.7,2.7) {$\pi_3 = 0.109$};
\node[pival] (pi5a) at (-2.7,2.05) {$\pi_5 = 0.300$};
\node[pival] (pi7a) at (-2.7,1.4) {$\pi_7 = 0.591$};

\node[pival] (pi3b) at (2.7,2.7) {$\pi_3 = 0.047$};
\node[pival, fill=red!10] (pi5b) at (2.7,2.05) {$\pi_5 = 0.700$};
\node[pival] (pi7b) at (2.7,1.4) {$\pi_7 = 0.253$};

\node[font=\footnotesize, align=center, gray] at (0, 2.05)
    {$\pi_5$ varies\\by $2.3\times$};

\draw[-{Latex}, thick, gray!70] (pi5a.south) -- ++(0,-0.35);
\draw[-{Latex}, thick, gray!70] (pi5b.south) -- ++(0,-0.35);

\node[cval] (cmc1) at (-2.7,0.45) {$\hat C^*_{\mathrm{MC}} = 0.0142262$};
\node[cval] (cmc2) at (2.7,0.45)  {$\hat C^*_{\mathrm{MC}} = 0.0142235$};

\draw[<->, thick, blue!70] (cmc1.east) -- node[above, font=\scriptsize]
    {differ by} (cmc2.west);
\node[font=\scriptsize, blue!70!black] at (0,0.05)
    {$2.7 \times 10^{-6}$};

\node[box, fill=yellow!15, minimum width=8cm, font=\footnotesize]
    at (0,-0.85)
    {Theory predicts \emph{same} $C^*$ regardless of $\pi_5$\\
     $C^*_{\mathrm{th}} = 0.0142522 \quad\Longleftarrow\quad$
     both runs agree to ${\sim}20\times$ better than
     $\sigma_{\mathrm{MC}}$};

\end{tikzpicture}
\caption{Direct empirical confirmation of the freedom of spectrally
neutral sites for the $\{3,5,7\}$ configuration ($a=10$, $p=0.7$,
$\gamma=0.5$). Even though $\pi_5$ varies by a factor of $2.3$
between the two runs (and $\pi_3,\pi_7$ change accordingly), both
Monte Carlo estimates $\hat C^*_{\mathrm{MC}}$ coincide with the
theoretical value $C^*_{\mathrm{th}}=0.0142522$ to within
statistical error, and differ from each other by only
$2.7\times 10^{-6}$ ---an order of magnitude smaller than the
combined statistical error
$\sqrt{2}\,\sigma_{\mathrm{MC}}\approx 5.3\times 10^{-5}$. The
spectrally neutral midpoint weight is thus directly demonstrated to
be a gauge degree of freedom of the critical reset family.}
\label{fig:pi5_freedom}
\end{figure}

The aggregate validation across all configurations of
Table~\ref{tab:verification} is summarized in
Table~\ref{tab:mc_validation}. In every case, the absolute deviation
$|\hat C^*_{\mathrm{MC}}-C^*_{\mathrm{th}}|$ falls within the
expected statistical error
$\sigma_{\mathrm{MC}}=\sqrt{C^*(1-C^*)/N}$, and a Pearson $\chi^2$
statistic over the nine entries gives $\chi^2\approx 2.93$ for $9$
degrees of freedom, fully consistent with the null hypothesis of
agreement between Monte Carlo and theory. The agreement holds
uniformly across unbiased ($p=0.5$) and biased ($p=0.6,0.7$)
regimes, even and odd domain sizes $a$, and two- and three-site
configurations.

\begin{table}[H]
\centering
\caption{Independent Monte Carlo validation of
Corollary~\ref{cor:rw}. For each configuration, $N=10^7$ trajectories
of the full reset dynamics of Definition~\ref{def:rw_multi_reset}
were simulated at $\gamma=0.5$ with reset distribution $\pi=\pi^*$.
The two rows for $\{3,5,7\}$ correspond to two distinct non-trivial
values of the spectrally neutral weight $\pi_5$, both yielding the
same $C^*_{\mathrm{th}}$ as predicted by Theorem~\ref{thm:general}.
The expected statistical error is
$\sigma_{\mathrm{MC}}=\sqrt{C^*(1-C^*)/N}$.}
\label{tab:mc_validation}
\smallskip
\begin{tabular}{cclccc}
\toprule
$a$ & $p$ & Sites ($\pi$) &
$C^*_{\mathrm{th}}$ &
$\hat C^*_{\mathrm{MC}}$ &
$|\hat C^*_{\mathrm{MC}}-C^*_{\mathrm{th}}|$ \\
\midrule
10 & 0.5 & $\{3,7\},\ \pi^*=(\tfrac12,\tfrac12)$
   & $0.5000000000$ & $0.5000686000$ & $6.9\times 10^{-5}$ \\
10 & 0.6 & $\{3,7\},\ \pi^*=(\tfrac{4}{13},\tfrac{9}{13})$
   & $0.1163636364$ & $0.1163768231$ & $1.3\times 10^{-5}$ \\
10 & 0.6 & $\{2,8\},\ \pi^*=(\tfrac{8}{35},\tfrac{27}{35})$
   & $0.1163636364$ & $0.1162992800$ & $6.4\times 10^{-5}$ \\
10 & 0.7 & $\{3,7\},\ \pi^*=(\tfrac{9}{58},\tfrac{49}{58})$
   & $0.0142521994$ & $0.0142300776$ & $2.2\times 10^{-5}$ \\
10 & 0.7 & $\{3,5,7\},\ \pi_5=0.3$
   & $0.0142521994$ & $0.0142261862$ & $2.6\times 10^{-5}$ \\
10 & 0.7 & $\{3,5,7\},\ \pi_5=0.7$
   & $0.0142521994$ & $0.0142235005$ & $2.9\times 10^{-5}$ \\
 9 & 0.7 & $\{3,6\},\ \pi_3^*\!\approx\!0.219095$
   & $0.0216081357$ & $0.0215828418$ & $2.5\times 10^{-5}$ \\
 8 & 0.6 & $\{2,6\},\ \pi^*=(\tfrac{4}{13},\tfrac{9}{13})$
   & $0.1649484536$ & $0.1648664308$ & $8.2\times 10^{-5}$ \\
12 & 0.6 & $\{4,8\},\ \pi^*=(\tfrac{4}{13},\tfrac{9}{13})$
   & $0.0807061791$ & $0.0807359308$ & $3.0\times 10^{-5}$ \\
\bottomrule
\end{tabular}
\end{table}

\newpage

\section{Conclusion and Outlook}
\label{sec:conclusion}

In this work we have extended the theory of stochastic resetting in confined
domains to the multi-site case and established a general structural criterion
for the existence of reset-invariant distributions.

\subsection*{Summary of results}

Using renewal theory, we derived an exact closed-form expression for the
ruin probability $q_z(\gamma)$ under multi-site geometric resetting,
showing that all dependence on the reset distribution $\pi$ is mediated
through the single scalar $C(\pi,\gamma)=\bar{u}_\pi/\bar{s}_\pi$.
A spectral analysis via the Doob $h$-transform then revealed the deep
structure of this coupling constant.

Our central result (Theorem~\ref{thm:general}) establishes a general
criterion --- valid for any absorbed Markov chain admitting a spectral
decomposition --- for the existence of a critical reset distribution $\pi^*$
with $C(\pi^*,\gamma)=\mathrm{const}$. The criterion is a
\emph{spectral duality condition}: the existence of an involution $\sigma$
on the reset sites and $\nu$-independent weights $\kappa(z)$ such that
$B_\nu(z)=\kappa(z)A_\nu(\sigma(z))$ for all spectral modes $\nu$.
This is a \emph{twisted symmetry} that exchanges the ruin and survival
spectral channels with a mode-independent weight, and is intimately related
to the Doob $h$-transform duality structure.

For the biased random walk (Corollary~\ref{cor:rw}), the spectral duality
condition is equivalent to the geometric symmetry $z_i+z_i'=a$ of the reset
sites. The critical distribution satisfies
$\pi_{z_i}^*/\pi_{z_i'}^*=(q/p)^{a/2-z_i}$, and the invariant coupling
constant is
\[
C^* = \frac{(q/p)^{a/2}}{1+(q/p)^{a/2}} = q_{a/2}^{(0)},
\]
the classical ruin probability from the midpoint, independent of the specific
symmetric site configuration, the number of reset sites $m$, the domain
parity, and the resetting rate $\gamma$. Numerical confirmation is
achieved to machine precision across all parameter regimes tested.

These findings reveal a previously hidden structure in reset processes,
linking spectral duality, spatial symmetry, and invariant quantities in a
unified framework that generalizes the single-site midpoint invariance
of~\cite{VegaCoso2026}.

\subsection*{Outlook}

Several directions for further investigation emerge naturally from this work,
and we briefly outline two of them, each of which constitutes a companion
paper in preparation.

A first direction concerns the \emph{global structure} of the coupling
constant $C(\pi,\gamma)$ in the full simplex of reset distributions, beyond
the critical set $\pi^*$ itself. Numerical evidence (Section~\ref{sec:numerics})
shows that $\pi^*$ acts as a separatrix: distributions ``below'' $\pi^*$
produce monotonically decreasing $C(\pi,\gamma)$, distributions ``above''
monotonically increasing, and $\pi^*$ exactly constant. This phase-like
decomposition of the simplex, together with the universality of $C^*$ across
different symmetric site configurations (a collapse phenomenon onto a single
curve $C^*=q_{a/2}^{(0)}(p)$ independent of the choice of pair), and the
asymptotic regime $\gamma\to 1$, will be systematically analyzed in a
companion paper.

A second, more foundational, direction concerns the \emph{structural origin}
of the spectral duality identified here. Remark~\ref{rem:twisted_doob} points
towards a Doob-type reweighting, and Remark~\ref{rem:necessity_structure}
identifies a ``rank-one mod-$\nu$'' property as the key ingredient for
necessity. A natural question is whether these phenomena admit a common
explanation in terms of the resolvent of the absorbing generator and its
rank-one perturbation under resetting, connecting our results with the
classical Doob $h$-transform and Perron--Frobenius theory. This perspective,
and its implications for generic absorbing Markov chains with resetting, will
be developed in a separate companion paper.

\section*{Acknowledgments}
The author thanks Prof.\ Javier Villarroel for insightful discussions and
acknowledges support from the University of Salamanca.


\end{document}